\newcommand{\KK}{\mathbb{K}}
\newcommand{\NN}{\mathbb{N}}
\newcommand{\RR}{\mathbb{R}}
\newcommand{\A}{\mathcal{A}}
\newcommand \ideal[1] {\langle #1 \rangle}
\newtheorem{Theorem}{Theorem}[section]
\newtheorem{Definition}[Theorem]{Definition}
\newtheorem{Lemma}[Theorem]{Lemma}
\newtheorem{Proposition}[Theorem]{Proposition}
\newtheorem{Corollary}[Theorem]{Corollary}
\newtheorem{Remark}[Theorem]{Remark}
\newtheorem{Example}[Theorem]{Example}
\newtheorem{Conjecture}[Theorem]{Conjecture}
\DeclareMathOperator{\rk}{rk}
\DeclareMathOperator{\Der}{Der}
\DeclareMathOperator{\pdeg}{pdeg}
\DeclareMathOperator{\gin}{gin}
\DeclareMathOperator{\rgin}{rgin}
\DeclareMathOperator{\LT}{LT}
\DeclareMathOperator{\GL}{GL}
\DeclareMathOperator{\HF}{HF}
\DeclareMathOperator{\DegRevLex}{DegRevLex}
\DeclareMathOperator{\reg}{reg}
\begin{document}

\title{Lefschetz Properties and Hyperplane Arrangements}

\begin{abstract} 
In this article, we study the weak and strong Lefschetz properties, and the related notion of almost revlex ideal, in the non-Artinian case, proving that several results known in the Artinian case hold also in this more general setting. We then apply the obtained results to the study of the Jacobian algebra of hyperplane arrangements.
\end{abstract}

\author{Elisa Palezzato}
\address{Elisa Palezzato, Department of Mathematics, Hokkaido University, Kita 10, Nishi 8, Kita-Ku, Sapporo 060-0810, Japan.}
\email{palezzato@math.sci.hokudai.ac.jp}
\author{Michele Torielli}
\address{Michele Torielli, Department of Mathematics, GI-CoRE GSB, Hokkaido University, Kita 10, Nishi 8, Kita-Ku, Sapporo 060-0810, Japan.}
\email{torielli@math.sci.hokudai.ac.jp}


\date{\today}
\maketitle



\section{Introduction}

The weak and strong Lefschetz properties are connected to many topics in algebraic geometry, commutative algebra and combinatorics. Some of these connections are quite surprising and there are still several open questions. 
In addition, most of the research in this area focus on the Artinian case. We refer to \cite{harima2013lefschetz} for an overview of the Lefschetz properties in the Artinian case, and to \cite{migliore2013survey} for several open questions in the area.

The study of Lefschetz properties has already been linked to the theory of free hyperplane arrangements and to Terao's conjecture. See for example \cite{maeno2011strong}, \cite{numata2007strong}, \cite{di2014singular} and \cite{ilardi2018jacobian}.

The goal of this paper is to start the study of the weak and strong Lefschetz properties, and the related notion of almost revlex ideal, in the non-Artinian case, and then apply the obtained results to the study of the Jacobian algebra of a hyperplane arrangement.

This paper is organized as follows. In Section 2, we describe the notions of weak and strong Lefschetz properties and their basic attributes. In Section 3, we illustrate the connection between the notion of $x_l$-chains and the strong Lefschetz property. In Section 4, we connect the non-Artinian case to the Artinian one. In Section 5, we recall the notion of almost revlex ideal and we put it in connection with the Lefschetz properties. In Section 6, we describe how having the weak Lefschetz property gives information on the graded Betti numbers. In Section 7, we recall the definitions and basic properties of hyperplane arrangements. In Section 8, we analyze the Jacobian algebra of an arrangement from the Lefschetz properties point of view.


\section{Lefschetz properties}


Let $\KK$ be a field of characteristic $0$ and consider the polynomial ring $S=\KK[x_1,\dots,x_l]$. 

\begin{Definition} 
A monomial ideal $I$ of $S$ is said to be  \textbf{strongly stable} if for every power-product $t \in I$ and
every $i,j$ such that $1\le i<j\le l$ and $x_j|t$, the power-product $x_i\cdot t/x_j\in I$.
\end{Definition}

\begin{Example}\label{ex:nonss-ssideal}
The ideal $I=\ideal{x^3, x^2y, xy^2, xyz}$ is not a strongly stable in $\mathbb{Q}[x,y,z]$ because $x\cdot xyz/y = x^2z\not\in I$. It is enough to add $x^2z$ as a minimal generator to $I$ to obtain a strongly stable ideal.
\end{Example}

\begin{Definition} 
Let $\sigma$ be a term ordering on $S$ and $f$ a non-zero polynomial in $S$. 
Then $\LT_\sigma(f)= \max_\sigma\{{\rm Supp}(f)\}$, where ${\rm Supp}(f)$ is the set of all power-products 
appearing with non-zero coefficient in $f$.  If
$I$ is an ideal in $S$, then the 
\textbf{leading term ideal} (or \textit{initial ideal})
of $I$ is the ideal $\LT_\sigma(I)$ of $S$ generated by
 $\{\LT_\sigma(f) \mid f \in I{\setminus}\{0\}\;\}$.
\end{Definition}

The following theorem is due to Galligo \cite{galligo1974propos}.

\begin{Theorem}[\cite{galligo1974propos}]\label{thm:Galligo}
Let $I$ be a homogeneous ideal of $S$, with $\sigma$ a term ordering such that
$x_1>_\sigma x_2 >_\sigma \dots >_\sigma x_l$. Then there exists a Zariski
open set $U\subseteq\GL(l)$ and a strongly stable ideal $J$ such that for each  $g\in U$, $\LT_\sigma(g(I)) = J$.
\end{Theorem}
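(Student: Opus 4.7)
The plan is to split the argument into two tasks: (a) produce a non-empty Zariski open $U\subseteq\GL(l)$ on which $g\mapsto\LT_\sigma(g(I))$ is constant, thereby defining a monomial ideal $J$; and (b) show that this $J$ is strongly stable.

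For (a), I would argue one graded piece at a time. The assignment $g\mapsto (g(I))_d$ is a morphism from $\GL(l)$ into the Grassmannian of $(\dim_\KK I_d)$-dimensional subspaces of $S_d$, and computing the monomials appearing in a $\sigma$-reduced row echelon form of a basis of $(g(I))_d$ shows that the set of leading monomials $M_d(g)$ is determined by the vanishing and non-vanishing of certain minors of a matrix of polynomial functions of $g$. Among the finitely many possible values of $M_d(g)$, a unique one is $\sigma$-maximal (it picks the largest monomials at each step of the reduction), and it is attained on a dense Zariski open set $U_d\subseteq\GL(l)$. To combine these into a single open set, I would invoke Gotzmann's regularity theorem: since $g(I)$ has the same Hilbert function as $I$, there is an integer $N$, depending only on this Hilbert function, beyond which any monomial ideal with the given Hilbert function is determined by its pieces of degree at most $N$. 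The intersection $U=\bigcap_{d\le N} U_d$ is then a non-empty Zariski open set on which $\LT_\sigma(g(I))$ is constant; call this common value $J$.

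For (b), I would show $J$ is fixed by the Borel subgroup $B\subseteq\GL(l)$ of triangular matrices, in the convention for which each $b\in B$ sends every variable $x_j$ to a polynomial whose $\sigma$-leading term is a non-zero scalar multiple of $x_j$. Given $b\in B$, the intersection $U\cap b^{-1}U$ is non-empty, as the intersection of two dense opens in the irreducible variety $\GL(l)$, and any $g$ in it satisfies $\LT_\sigma(g(I))=\LT_\sigma(bg(I))=J$. Using one-parameter subgroups $b_{ij}(\lambda)=\exp(\lambda E_{ij})$ of $B$, expanding $b_{ij}(\lambda)\cdot t$ for a monomial $t\in J$ as a polynomial in $\lambda$, and separating coefficients (the hypothesis $\operatorname{char}\KK=0$ enters here, to ensure that distinct powers of $\lambda$ are linearly independent over $\KK$), I would conclude that every monomial occurring in the expansion lies in $J$, so that $b_{ij}(\lambda)\cdot J\subseteq J$ for all $\lambda$. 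This shows $J$ is Borel-fixed, whence the classical result that, in characteristic $0$, a Borel-fixed monomial ideal is strongly stable completes the proof.

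The main obstacle is stage (a): collating the degree-wise open sets into a single open $U$ requires a uniform bound on the degrees involved, which I would supply via Gotzmann's regularity theorem together with the finiteness of monomial ideals with a prescribed Hilbert function truncated at a bounded degree. Stage (b), while combinatorially involved, is essentially a classical one-parameter-subgroup calculation going back to Galligo's original argument.
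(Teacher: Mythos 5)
The paper does not prove this statement: it is quoted verbatim as Galligo's theorem with the citation \cite{galligo1974propos}, so there is no in-paper argument to compare against. Measured against the classical proof (Galligo; see also Eisenbud, Ch.~15, or Green's notes on generic initial ideals), your outline has the right architecture --- degree-wise genericity via minors/Pl\"ucker coordinates, then Borel-fixedness via one-parameter subgroups, then the characteristic-zero equivalence of Borel-fixed and strongly stable --- and both of your stages can be completed. Two steps are thinner than they should be. First, in stage (a) the uniformization is not quite delivered by Gotzmann's regularity theorem as you invoke it: that theorem controls saturated ideals with a given Hilbert \emph{polynomial}, whereas you need a single $N$ bounding the generator degrees of \emph{all} the (possibly non-saturated) monomial ideals $\LT_\sigma(g(I))$. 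The clean classical route is to observe that the generic values $J_d$ assemble into an ideal $J=\bigoplus_d J_d$ (because $S_1\cdot J_d\subseteq J_{d+1}$ holds for a common generic $g$), so $J$ is finitely generated, say in degrees $\le N$; then for $g\in\bigcap_{d\le N}U_d$ and $d>N$ one has $J_d\subseteq\LT_\sigma(g(I))_d$ with both sides of dimension $\dim_\KK I_d$, forcing equality. Second, in stage (b) there is a logical jump from ``$\LT_\sigma(g(I))=\LT_\sigma(b_{ij}(\lambda)g(I))=J$'' to ``expand $b_{ij}(\lambda)\cdot t$ for $t\in J$ and conclude $b_{ij}(\lambda)\cdot J\subseteq J$'': the hypothesis constrains how $b_{ij}(\lambda)$ acts on the subspace $g(I)_d$, not on its monomial shadow $J_d$, and one cannot simply apply $b_{ij}(\lambda)$ to monomials of $J$. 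The standard bridge is to expand the Pl\"ucker coordinates of $\bigwedge^{k}\bigl(b_{ij}(\lambda)\,g(I)_d\bigr)$ as polynomials in $\lambda$, use that the coordinates indexed by exterior monomials $\sigma$-larger than the one attached to $J_d$ vanish for a dense set of $\lambda$ (hence identically), and extract the $\lambda$-linear term to get the infinitesimal condition $x_i\,t/x_j\in J$ for $t\in J$ with $x_j\mid t$ --- which is exactly strong stability, with characteristic $0$ entering as you say. With these two repairs your sketch becomes the standard proof.
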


\begin{Definition} 
The strongly stable ideal $J$ given in Theorem~\ref{thm:Galligo} is called the \textbf{generic initial ideal} with respect to $\sigma$ of $I$ and it is denoted by {\boldmath$\gin_\sigma(I)$}.  
In particular, when $\sigma= \DegRevLex$, $\gin_\sigma(I)$ is simply denoted by {\boldmath$\rgin(I)$}.
\end{Definition}

As described in \cite{BPT2016}, we can read a lot of information on an ideal from its generic initial ideal. For example, we have the following.
\begin{Remark}\label{rem:rginsameHFasideal} 
Let $I$ be a homogeneous ideal of $S$. Then the Hilbert function of $S/I$ coincides with the one of $S/\rgin(I)$.
\end{Remark}

\begin{Definition}\label{def:WLPandSLP} 
Let $R$ be a graded ring over $\KK$, and $R = \bigoplus_{i\geq 0} R_i$ its decomposition into homogeneous
components with $\dim_\KK (R_i) < \infty$.
\begin{enumerate}
\item The graded ring $R$ is said to have the \textbf{weak Lefschetz property (WLP)}, 
if there exists an element $\ell \in R_1$ such that
the multiplication map
\begin{align*}
\times \ell \colon R_i &\rightarrow R_{i+1}\\ 
f &\mapsto \ell f
\end{align*}
 is full-rank for every $i \geq 0$. In this case, $\ell$ is called a \textbf{weak Lefschetz element}.
\item The graded ring $R$ is said to have the \textbf{strong Lefschetz property (SLP)}, 
if there exists an element $\ell \in R_1$ such that
the multiplication map
\begin{align*}
\times \ell^s \colon R_i &\rightarrow R_{i+s}\\ 
f &\mapsto \ell^sf
\end{align*}
is full-rank for every $i \geq 0$ and $s\geq 1$. In this case, $\ell$ is called a \textbf{strong Lefschetz element}.
\end{enumerate}
\end{Definition}


Similarly to Lemma 2.7 of \cite{wiebe2004lefschetz}, for strongly stable ideals is easier to check if the associated quotient algebra has the SLP or the WLP.

\begin{Lemma}\label{lemma:SSWLPx_l}
Let $I$ be a strongly stable ideal of $S$.
Then $S/I$ has the SLP (respectively the WLP) if and only if $~S/I$ has the SLP (respectively the WLP) with Lefschetz element $x_l$. 
\end{Lemma}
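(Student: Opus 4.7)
The ``if'' direction is immediate from the definitions. For the converse, I will treat the SLP case, as the WLP case is the $s=1$ specialization. The plan is to reformulate the Lefschetz condition as a Hilbert function identity for $S/(I+\ell^s S)$, observe that it holds for a generic linear form, and then exploit strong stability to transfer the identity from a generic $\ell$ to $x_l$.

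For any linear form $\ell$ and any $s \geq 1$, the short exact sequence
$$0 \to \frac{S}{I:\ell^s}(-s) \xrightarrow{\times \ell^s} \frac{S}{I} \to \frac{S}{I+\ell^s S} \to 0$$
shows that the map $\times \ell^s \colon (S/I)_d \to (S/I)_{d+s}$ has maximal rank for every $d$ if and only if
$$\HF_{S/(I+\ell^s S)}(d) = \max\bigl\{0,\; \HF_{S/I}(d) - \HF_{S/I}(d-s)\bigr\}$$
for every $d$. Since the rank of this multiplication map is a lower-semicontinuous function of the coefficients of $\ell$, the set of strong Lefschetz elements is Zariski-open in $S_1$; hence, under the assumption that $S/I$ has the SLP, this open set is non-empty and the Hilbert function identity above holds for a generic $\ell \in S_1$ and every $s \geq 1$.

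The key remaining step is to show that, for strongly stable $I$ and generic $\ell$,
$$\HF_{S/(I+\ell^s S)} \;=\; \HF_{S/(I+x_l^s S)}$$
for every $s \geq 1$. I would deduce this from Remark~\ref{rem:rginsameHFasideal} together with Theorem~\ref{thm:Galligo}: because $I$ is strongly stable, it is Borel-fixed, so $\rgin(I) = I$, and in $\DegRevLex$ the variable $x_l$ plays the role of the generic linear form with respect to colon and quotient operations. This yields $\rgin(I+\ell^s S) = I + x_l^s S$ for generic $\ell$, so the Hilbert functions of $S/(I+\ell^s S)$ and $S/(I+x_l^s S)$ coincide. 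Combined with the previous paragraph, the Hilbert function identity characterizing full rank is also satisfied by $x_l$, so $x_l$ is a strong Lefschetz element.

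The main obstacle is precisely this last identification, $\rgin(I + \ell^s S) = I + x_l^s S$ for strongly stable $I$ and generic $\ell$. This is exactly where the strong stability hypothesis is used in an essential way: without it the Hilbert function of $S/(I+\ell^s S)$ for generic $\ell$ need not match that of $S/(I+x_l^s S)$, and the transfer from a generic Lefschetz element to the specific element $x_l$ breaks down.
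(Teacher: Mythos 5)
Your reduction of ``full rank'' to a Hilbert function identity for $S/(I+\ell^s S)$ via the short exact sequence is correct, and the semicontinuity argument (applied degree by degree, so that you never need to intersect infinitely many open sets) does show that a generic $\ell$ satisfies that identity in each fixed degree. The genuine gap is in the step you yourself flag as the main obstacle: the claimed identification $\rgin(I+\ell^s S)=I+x_l^s S$ is false. The ideal $I+x_l^s S$ is in general not strongly stable (it contains $x_l^s$ but need not contain $x_1x_l^{s-1}$), so it cannot be a generic initial ideal in characteristic $0$; concretely, for $I=0$ and $s=1$ one has $\rgin(\ideal{\ell})=\ideal{x_1}\ne\ideal{x_l}$. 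What your argument actually requires is only the weaker statement that $S/(I+\ell^s S)$ and $S/(I+x_l^s S)$ have the same Hilbert function for generic $\ell$. That statement is true, but it does not follow from Galligo's theorem together with $\rgin(I)=I$; it is Conca's lemma (the one invoked in the proof of Proposition~\ref{prop:ILPginLP}), whose proof rests on the specifically revlex identity $\LT_{\DegRevLex}(J+\ideal{x_l^k})=\LT_{\DegRevLex}(J)+\ideal{x_l^k}$ for homogeneous $J$. As written, your proof assumes exactly the point that has to be established.

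It is also worth noting that the paper's own proof avoids this genericity machinery entirely and is strictly more elementary. For a strongly stable ideal $I$ one has $(I:_S x_l^k)\subseteq (I:_S\ell^k)$ for \emph{every} $\ell\in S_1$ and every $k\ge 1$: since $(I:_S x_l^k)$ is a monomial ideal, it suffices to note that if $x_l^k m\in I$ for a monomial $m$, then every monomial of $\ell^k m$ is obtained from $x_l^k m$ by replacing copies of $x_l$ with smaller variables, hence lies in $I$ by strong stability. Consequently, in each degree the kernel of $\times x_l^k$ on $S/I$ is contained in the kernel of $\times\ell^k$, so $x_l$ realizes the maximal possible rank among all linear forms; if some $\ell$ is a weak (respectively strong) Lefschetz element, then so is $x_l$. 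If you wish to salvage your route, replace the false gin identity by the Conca-type Hilbert function comparison for the ideals $(I,\ell^k)$ and $(I,x_l^k)$; but the colon-ideal containment gives the result directly, with no appeal to generic initial ideals.
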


\begin{proof}
For any element $\ell \in S_1$, $j \in \NN$ and $k\geq 1$ define
$$c_\ell(j,k) = \dim_{\KK}((I :_{S} \ell^k)_{j})$$ 
and
$$\alpha(j,k) = \max\{\dim_{\KK}(I_j),\dim_{\KK}(S_j)-\dim_{\KK}((S/I)_{j+k})\},$$
where $(I :_{S} \ell^k)=\{f\in S~|~\ell^kf\in I\}$.
For any $\ell \in S_1$ we have that $c_\ell(j,k) \geq \alpha(j,k)$.
It is clear that $\ell$ is a weak (respectively strong) Lefschetz element
on $S/I$ if and only if $c_\ell(j,1) = \alpha(j,1)$ for all $j \in \NN$ (respectively $c_\ell(j,k) = \alpha(j,k)$ for all $j \in \NN$ and all $k \geq 1$). 
Since $I$ is strongly stable, then $(I :_S x^k_l) \subseteq (I :_S \ell^k)$ for every $\ell \in S_1$ and all $k \geq 1$. 
This implies $c_{x_l}(j,k) \leq c_\ell(j,k)$ for all $j \in \NN$ and all $k \geq 1$, and hence we obtain the equivalence.
\end{proof}

Similarly to Proposition 2.8 of \cite{wiebe2004lefschetz}, to check if a quotient algebra has the SLP or the WLP, it is enough to check the quotient by strongly stable ideals.
\begin{Proposition}\label{prop:ILPginLP}
Let $I$ be a homogeneous ideal of $S$.
Then the graded ring $S/I$ has the SLP (respectively the WLP) if and only if $S/\rgin(I)$ 
has the SLP (respectively the WLP).
\end{Proposition}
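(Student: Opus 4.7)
The plan is to transfer the Lefschetz property across $\rgin$ by exploiting the special role of $x_l$ in the degree-reverse-lex order, together with Galligo's Theorem \ref{thm:Galligo}, Lemma \ref{lemma:SSWLPx_l} applied to the strongly stable ideal $\rgin(I)$, and Remark \ref{rem:rginsameHFasideal} to match Hilbert functions.

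Using the notation $\alpha(j,k)$ and $c_\ell(j,k)$ from the proof of Lemma \ref{lemma:SSWLPx_l}, a linear form $\ell$ is a Lefschetz element of $S/J$ exactly when $c^J_\ell(j,k) = \alpha_J(j,k)$ for the relevant pairs $(j,k)$. By Remark \ref{rem:rginsameHFasideal}, $\alpha_I(j,k) = \alpha_{\rgin(I)}(j,k)$; write $\alpha(j,k)$ for this common value. Since $\rgin(I)$ is strongly stable, Lemma \ref{lemma:SSWLPx_l} gives that $S/\rgin(I)$ has the SLP (resp.\ WLP) if and only if $c^{\rgin(I)}_{x_l}(j,k) = \alpha(j,k)$ for all $j$ and $k \geq 1$ (resp.\ $k=1$).

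The crux is the dimension identity
\[
\dim_\KK (I :_S \ell^k)_j \;=\; \dim_\KK (\rgin(I) :_S x_l^k)_j,
\]
valid for $\ell := g^{-1}(x_l)$ with $g$ in the open set $U$ from Theorem \ref{thm:Galligo}. It follows from three facts: the $\KK$-linear isomorphism $g$ sends $(I :_S \ell^k)$ onto $(g(I) :_S x_l^k)$, so dimensions are preserved; leading-term ideals preserve Hilbert functions; and the classical compatibility $\LT_\sigma(J :_S x_l^k) = \LT_\sigma(J) :_S x_l^k$ for the degree-reverse-lex order (specific to $x_l$ being the smallest variable) yields $\LT_\sigma(g(I) :_S x_l^k) = \rgin(I) :_S x_l^k$.

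Putting the pieces together, $W := \{g^{-1}(x_l) \mid g \in U\}$ is a dense open subset of $S_1$, and for every $\ell \in W$ one has $c^I_\ell(j,k) = c^{\rgin(I)}_{x_l}(j,k)$. Hence if $x_l$ is Lefschetz for $S/\rgin(I)$, then any $\ell \in W$ is Lefschetz for $S/I$; conversely, if $\ell'$ is Lefschetz for $S/I$, then for each $(j,k)$ the Zariski open locus $V_{j,k} := \{\ell \in S_1 : c^I_\ell(j,k) = \alpha(j,k)\}$ contains $\ell'$ hence is nonempty, so it meets the dense open set $W$, and any $\ell \in V_{j,k} \cap W$ forces $c^{\rgin(I)}_{x_l}(j,k) = \alpha(j,k)$. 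The main obstacle is the colon--leading-term identity $\LT_\sigma(J :_S x_l^k) = \LT_\sigma(J) :_S x_l^k$ for the degree-reverse-lex order, a classical but nontrivial property that underpins the whole comparison.
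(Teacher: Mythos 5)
Your proof is correct, and it runs parallel to the paper's proof but on the ``kernel side'' rather than the ``cokernel side''. The paper tracks $d_\ell(j,k)=\dim_\KK((S/(I,\ell^k))_{j+k})$ and imports Conca's lemma that $S/(I,\ell)$ and $S/(\rgin(I),x_l)$ have the same Hilbert function for general $\ell$, extending it to powers via the revlex identity $\LT_{\DegRevLex}(gI+(x_l^k))=\LT_{\DegRevLex}(gI)+(x_l^k)$; you instead track $c_\ell(j,k)=\dim_\KK((I:_S\ell^k)_j)$ and prove the needed comparison directly from the dual revlex identity $\LT_{\DegRevLex}(J:_Sx_l^k)=\LT_{\DegRevLex}(J):_Sx_l^k$ (which the paper itself uses elsewhere, in the $x_l$-chain lemma of Section~3). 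Since kernel and cokernel dimensions determine each other once the Hilbert functions are matched by Remark~\ref{rem:rginsameHFasideal}, the two arguments carry the same content; what yours buys is self-containedness (no appeal to Conca's published lemma) and an explicit treatment of the genericity step --- the upper semicontinuity of $\ell\mapsto c_\ell(j,k)$, the openness of each locus $V_{j,k}$, and the intersection with the set $W$ of images of $x_l$ under the Galligo coordinate changes --- which the paper leaves implicit behind the phrase ``for a general linear form''. Two small points to tidy: $W$ is the image of a dense open subset of $\GL(l)$ under a dominant morphism, so it \emph{contains} a dense open subset of $S_1$ rather than literally being one (this suffices for your intersection argument); and it is worth recording the one-line induction $(I:_S x_l^k)=((I:_S x_l):_S x_l^{k-1})$ that reduces the colon--leading-term identity for $x_l^k$ to the classical $k=1$ case for homogeneous ideals in revlex.
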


\begin{proof}
For any element $\ell \in S_1$, $j \in \NN$ and $k \geq 1$ define 
$$d_\ell(j,k) = \dim_{\KK}((S/(I,\ell^k))_{j+k})$$
and 
$$\gamma(j,k) = \max\{0, \dim_{\KK}((S/I)_{j+k}) - \dim_{\KK}((S/I)_j)\}.$$
For any $\ell \in S_1$ we have that $d_\ell(j,k) \geq \gamma(j,k)$.
It is clear that $\ell$ is a weak (respectively strong) Lefschetz element on
$S/I$ if and only if $d_\ell(j,1) = \gamma(j,1)$ for all $j \in \NN$ (respectively $d_\ell(j,k) = \gamma(j,k)$ for all $j \in \NN$ and all $k \geq 1$).
In Lemma~1.2 of \cite{conca2003reduction} Conca showed that the Hilbert function of $S/(\rgin(I),x_l)$ is equal to the
Hilbert function of $S/(I,\ell)$ for a general linear form $\ell \in S_1$. 
Together with Theorem 2.4 and Lemma 2.7, this yields the assertion about the weak Lefschetz property.
By slightly generalizing the arguments of Conca's proof (one has to use the fact that $\LT_{\DegRevLex}(gI+(x^k_l))= \LT_{\DegRevLex}(gI)+(x^k_l)$ for all $k \geq 1$), one obtains that the Hilbert function of $S/(\rgin(I),x^k_l)$ is equal to the Hilbert function of $S/(I,\ell^k)$ for a general linear form $\ell \in S_1$ and all $k \geq 1$. This yields the assertion about the strong Lefschetz property.
\end{proof}

Directly from Definition~\ref{def:WLPandSLP} of Lefschetz properties, we have the next two propositions (c.f. \cite[Proposition 3.2, Proposition 3.9]{harima2013lefschetz}).

\begin{Proposition} Let $R$ be a ring over $\KK$ with standard grading. If $R$ has the WLP, then $R$ has an increasing or unimodal Hilbert function.
\end{Proposition}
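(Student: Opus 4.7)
The plan is to exploit the elementary dichotomy that a linear map between finite-dimensional $\KK$-vector spaces has full rank if and only if it is injective or surjective. Applied to the multiplication map $\times \ell \colon R_i \to R_{i+1}$, each step of the Hilbert function either satisfies $\dim_\KK R_i \le \dim_\KK R_{i+1}$ (injective case) or $\dim_\KK R_{i+1} \le \dim_\KK R_i$ (surjective case); the goal is therefore to show that along the sequence of degrees the map can switch direction at most once.

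First I would fix a weak Lefschetz element $\ell \in R_1$ and establish a propagation lemma: if $\times \ell \colon R_{i_0} \to R_{i_0+1}$ is surjective, then $\times \ell \colon R_j \to R_{j+1}$ is surjective for every $j \ge i_0$. The standard grading of $R$ enters essentially here, through the identity
$$R_{j+2} = R_1 \cdot R_{j+1} = R_1 \cdot \ell R_j = \ell \cdot R_1 R_j = \ell R_{j+1},$$
after which induction closes the argument.

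With the propagation in hand, the case split is straightforward. If $\times \ell$ is injective at every degree, then the Hilbert function is non-decreasing and we are done. Otherwise let $i_0$ be the smallest index at which injectivity fails; full rank then forces surjectivity at $i_0$, and propagation extends this surjectivity to all $j \ge i_0$, giving $\dim_\KK R_{j+1} \le \dim_\KK R_j$ there, while minimality of $i_0$ gives $\dim_\KK R_i \le \dim_\KK R_{i+1}$ for $i < i_0$; hence the Hilbert function is unimodal with peak at $i_0$. The only subtlety relative to the Artinian case is the propagation step itself---in the Artinian setting, eventual vanishing of $R_i$ makes it automatic, whereas here I genuinely need the standard grading hypothesis to transport surjectivity upward---but the display above settles it cleanly.
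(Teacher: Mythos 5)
Your proof is correct and follows essentially the same route as the paper: both hinge on propagating surjectivity of $\times\ell$ upward using the standard grading, and then splitting into the everywhere-injective case and the case of a first failure of injectivity. The only cosmetic difference is that the paper phrases the propagation via powers of the irrelevant maximal ideal ($\mathfrak{m}^{i+1}=\ell\mathfrak{m}^{i}$ for $i\ge k$), whereas you work directly with the graded components ($R_{j+2}=R_1\cdot\ell R_j=\ell R_{j+1}$), which is the same computation.
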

\begin{proof} Assume $R$ does not have an increasing Hilbert function. Let $\mathfrak{m}$ be the irrelevant maximal ideal of $R$. Since $R$ has standard grading, $\mathfrak{m}^i$ is generated by $R_i$.
Let $k\ge0$ be the smallest integer such that $\dim_\KK(R_k)>\dim_\KK(R_{k+1})$. Since $R$ has the WLP, there exists $\ell \in R_1$ such that the map $\times \ell \colon R_k \rightarrow R_{k+1}$ is surjective, and hence that $\mathfrak{m}^{k+1}=\ell\mathfrak{m}^k$. Moreover, this implies that $\mathfrak{m}^{i+1}=\ell\mathfrak{m}^i$ for all $i\ge k$. Hence, the map
$\times \ell \colon R_i \rightarrow R_{i+1}$ is surjective for all $i\ge k$. This implies that $\dim_\KK(R_i)\ge\dim_\KK(R_{i+1})$ for all $i\ge k$, and hence, the Hilbert function of $R$ is unimodal.
\end{proof}

By \cite[Remark 3.3]{harima2013lefschetz}, the previous Proposition is false if we do not assume that $R$ has standard grading. Moreover,
not all rings with unimodal Hilbert function have the WLP.

\begin{Example} Let $I$ be the strongly stable ideal of $S=\KK[x,y,z,w]$ generated by $\{x^2, xy, xz, xw\}$. The quotient $S/I$ has an increasing Hilbert function. On the other hand, since the map $\times w \colon (S/I)_1 \rightarrow (S/I)_2$ is not injective, $w$ is not a Lefschetz element for $S/I$. By Lemma~\ref{lemma:SSWLPx_l}, $S/I$ does not have the WLP.
\end{Example}

\begin{Proposition} Let $R$ be a ring over $\KK$ with any grading. If $R$ has the SLP, then $R$ has an increasing or unimodal Hilbert function.
\end{Proposition}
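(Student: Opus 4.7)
The plan is to argue by contradiction, exploiting the fact that SLP gives us multiplication maps $\times \ell^s : R_i \to R_{i+s}$ for \emph{every} $s \geq 1$, not just $s = 1$. This extra flexibility (compared to WLP) is what lets us drop the standard-grading hypothesis.

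Suppose, for a contradiction, that the Hilbert function of $R$ is neither increasing nor unimodal. Then there must exist indices $i < j < k$ forming a valley, that is, with $\dim_\KK(R_j) < \dim_\KK(R_i)$ and $\dim_\KK(R_j) < \dim_\KK(R_k)$. Let $\ell \in R_1$ be a strong Lefschetz element. I would consider the composition
\begin{equation*}
R_i \xrightarrow{\,\times \ell^{j-i}\,} R_j \xrightarrow{\,\times \ell^{k-j}\,} R_k,
\end{equation*}
which equals the full-rank map $\times \ell^{k-i} : R_i \to R_k$ by SLP. In particular, its image has dimension $\min(\dim_\KK R_i, \dim_\KK R_k)$, but this image is also contained in $\ell^{k-j} R_j$, whose dimension is at most $\dim_\KK(R_j)$.

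The main (and essentially only) step is the resulting case analysis. If $\dim_\KK(R_i) \leq \dim_\KK(R_k)$, then $\times \ell^{k-i}$ is injective, so its image has dimension $\dim_\KK(R_i) > \dim_\KK(R_j)$, contradicting that the image sits inside $\ell^{k-j} R_j$. Symmetrically, if $\dim_\KK(R_k) \leq \dim_\KK(R_i)$, then $\times \ell^{k-i}$ is surjective, so the image has dimension $\dim_\KK(R_k) > \dim_\KK(R_j)$, again a contradiction. Either way we reach an absurdity, forcing the Hilbert function to be unimodal (or increasing).

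The only subtle point I anticipate is getting the definition of "non-unimodal" precisely right, i.e.\ verifying that the failure of both "increasing" and "unimodal" really does produce a triple $i < j < k$ with a strict valley at $j$. Once that is pinned down, the rest is a one-line factorization argument, and no hypothesis on standard grading is needed because SLP directly provides the long-range maps $\times \ell^s$.
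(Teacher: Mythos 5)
Your proof is correct and follows essentially the same route as the paper: both arguments locate a strict valley $\dim_\KK(R_i) > \dim_\KK(R_j) < \dim_\KK(R_k)$ and conclude that $\times \ell^{k-i}\colon R_i \to R_k$ cannot have full rank; you merely make explicit the factorization through $R_j$ that the paper leaves implicit.
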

\begin{proof} Assume that $R$ does not have an unimodal or increasing Hilbert function. Hence, assume that there exist three integers $k<l<m$
such that $\dim_\KK(R_k)>\dim_\KK(R_l)<\dim_\KK(R_m)$.
This implies that the map $\times \ell^{m-k} \colon R_k \rightarrow R_m$ cannot have full rank for any element $\ell \in R_1$. Hence, $R$ cannot have the SLP.
\end{proof}

\section{$x_l$-chains and the strong Lefschetz property}

In this section, we will consider the polynomial ring $S=\KK[x_1,\dots,x_l]$  with $\sigma= \DegRevLex$ as term ordering.

\begin{Definition} 
Let $I$ be a homogeneous ideal of $S$. A \textbf{$x_l$-chain} with respect to $I$ is a sequence
$$\{tx_l^k~|~k\ge0, tx_l^k\notin\LT(I)\},$$
where $t$ is a power-product in $S$ not divisible by $x_l$.
\end{Definition}

In general, we have an infinite number of distinct $x_l$-chains and an $x_l$-chain might be infinite. If we assume that $S/I$ is Artinian, then 
each $x_l$-chain is finite and there is only a finite number of them.

Here we give a necessary and sufficient condition for a quotient ring $S/I$ to have the SLP in terms of $x_l$-chains.


\begin{Definition} 
Let $I$ be a homogeneous ideal of $S$. We say that $I$ has the \textbf{strong Lefschetz condition} (\textbf{SL condition}) if
for any two power-products $u,v\in S\setminus\LT(I)$ not divisible by $x_l$ with $\deg(u)<\deg(v)$, if the $x_l$-chain $v, vx_l, vx_l^2, \dots, vx_l^r$ is finite, then the $x_l$-chain $u, ux_l, ux_l^2, \dots, ux_l^s$ is finite and $\deg(ux_l^s)\ge\deg(vx_l^r)$.
\end{Definition}

In \cite[Lemma 7]{harima2009generic}, the authors proved a similar statement for the Artinian case.
\begin{Lemma} 
Let $I$ be a homogeneous ideal of $S$. The following three conditions are equivalent
\begin{enumerate}
\item $S/I$ has the SLP with strong Lefschetz element $x_l$.
\item $S/\LT(I)$ has the SLP with strong Lefschetz element $x_l$.
\item $I$ has the SL condition.
\end{enumerate}
\end{Lemma}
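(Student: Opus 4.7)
The plan is to prove $(1) \Leftrightarrow (2) \Leftrightarrow (3)$, generalizing the Artinian argument of \cite[Lemma~7]{harima2009generic}.

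For $(1) \Leftrightarrow (2)$, I would argue as in Proposition~\ref{prop:ILPginLP}. On any graded ring $R$, the map $\times x_l^s \colon R_i \to R_{i+s}$ has full rank if and only if
$$\dim_{\KK}(R/x_l^s R)_{i+s} = \max\{0,\, \dim_{\KK} R_{i+s} - \dim_{\KK} R_i\}.$$
Since $S/I$ and $S/\LT(I)$ have the same Hilbert function, the equivalence reduces to showing that $S/(I + (x_l^s))$ and $S/(\LT(I) + (x_l^s))$ have the same Hilbert function in every degree for every $s \geq 1$. This in turn is the $\DegRevLex$ identity $\LT(I + (x_l^s)) = \LT(I) + (x_l^s)$, which I would prove by Buchberger's criterion: if $G$ is a $\DegRevLex$ Gr\"obner basis of $I$, every $S$-pair $S(g, x_l^s)$ with $g \in G$ is divisible by $x_l^s$, because in $\DegRevLex$ the leading term of a homogeneous polynomial is the monomial of smallest $x_l$-exponent, so every monomial of the tail $g - \LT(g)$ has $x_l$-exponent at least that of $\LT(g)$, and after multiplying by the power of $x_l$ coming from the $\mathrm{lcm}$, the $S$-pair lies in $(x_l^s)$.

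For $(2) \Leftrightarrow (3)$, I would argue combinatorially with $J = \LT(I)$. The standard monomials of $J$ form a basis of $S/J$ and partition into $x_l$-chains, each indexed by a power-product $t$ not divisible by $x_l$ with $t \notin J$. Multiplication by $x_l$ sends the standard monomial $t x_l^k$ to $t x_l^{k+1}$ if the latter is still standard and to $0$ otherwise, so $\times x_l^s \colon (S/J)_i \to (S/J)_{i+s}$ splits as a direct sum indexed by the chains. A chain with range $[\deg t,\, \deg t + r]$ (with $r \in \NN \cup \{\infty\}$) contributes $1$ to the kernel when it covers $i$ but not $i+s$, and $1$ to the cokernel when it covers $i+s$ but not $i$. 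Hence $\times x_l^s$ fails to be full rank for some pair $(i,s)$ exactly when there exist two chains $C_u, C_v$ with $\deg u < \deg v$ while the end degree of $C_v$ strictly exceeds that of $C_u$ (with an infinite chain treated as having end degree $+\infty$), which is precisely the negation of the SL condition.

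The main new ingredient beyond the Artinian case of \cite[Lemma~7]{harima2009generic} is the appearance of infinite $x_l$-chains. These force one to extend the combinatorial bookkeeping of $(2) \Leftrightarrow (3)$ so that chain ranges lie in $\NN \cup \{\infty\}$, and to verify the $\DegRevLex$ identity $\LT(I + (x_l^s)) = \LT(I) + (x_l^s)$ for every $s \geq 1$, a mild extension of the $s = 1$ case implicit in Conca's original lemma used in the proof of Proposition~\ref{prop:ILPginLP}.
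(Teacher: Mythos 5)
Your proposal is correct in substance, and its first half takes a genuinely different (dual) route from the paper's. For $(1)\Leftrightarrow(2)$ the paper measures kernels: it invokes the $\DegRevLex$ identity $\LT(I:x_l^s)=\LT(I):x_l^s$ together with $\ker(\times x_l^s)\cong (I:x_l^s)/I$ to conclude that the ranks of $\times x_l^s$ on $S/I$ and on $S/\LT(I)$ agree degree by degree. You measure cokernels instead, reducing to the identity $\LT(I+(x_l^s))=\LT(I)+(x_l^s)$, which you verify by Buchberger's criterion; your key observation, that the $\DegRevLex$ leading term of a homogeneous polynomial minimizes the $x_l$-exponent so every $S$-pair $S(g,x_l^s)$ lands in $(x_l^s)$, is exactly right. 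The two identities are dual manifestations of the same revlex property and either one yields the equivalence. For $(2)\Leftrightarrow(3)$ the paper offers only the one-line assertion that the SL condition ``is exactly'' the full-rank condition for $x_l$ on $S/\LT(I)$; your chain-by-chain bookkeeping supplies the argument the paper omits, and your criterion is correct: $\times x_l^s$ fails to have full rank for some $(i,s)$ precisely when some finite chain starts strictly earlier and ends strictly earlier than another chain.

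The one point needing care is your closing claim that this criterion ``is precisely the negation of the SL condition.'' That is true only under your parenthetical convention that an infinite chain has end degree $+\infty$, i.e.\ only if the SL condition is read as ``$\deg(u)<\deg(v)$ implies the end degree of the $u$-chain is $\geq$ the end degree of the $v$-chain in $\NN\cup\{\infty\}$.'' The definition as printed in the paper additionally demands that the $u$-chain be \emph{finite} whenever the $v$-chain is, and that extra demand is not forced by the SLP: for the strongly stable ideal $I=\ideal{x^2,xy,y^2,xz}$ in $\KK[x,y,z]$ the quotient has the SLP with Lefschetz element $z$ (its Hilbert function is $1,3,2,2,\dots$ and every map $\times z^s$ has full rank), yet the chain of $x$ is finite while the chain of $1$ is infinite, so the literal SL condition fails. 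Thus your argument proves the lemma for the $+\infty$ reading of the definition, which is the reading that makes the statement true; you should say explicitly that you are using that reading, or equivalently point out that the printed definition of the SL condition must be amended in the non-Artinian case by dropping the requirement that the $u$-chain be finite and comparing end degrees in $\NN\cup\{\infty\}$.
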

\begin{proof} Since the SL condition is exactly the condition for $x_l$ to be a strong Lefschetz element for $S/\LT(I)$, we have that conditions (2) and (3) are clearly equivalent. We will now prove that conditions (1) and (2) are equivalent.

We have that $\LT(I:x_l^s)=\LT(I):x_l^s$, for all $s\ge1$. On the other hand, we have the following two isomorphisms
$$\ker(\times x_l^s\colon S/I\to S/I)=I:x_l^s/I,$$
$$\ker(\times x_l^s\colon S/\LT(I)\to S/\LT(I))=\LT(I):x_l^s/\LT(I),$$
and hence, we have that
$$\rk(\times x_l^s\colon (S/I)_i\to (S/I)_{i+s})= \rk(\times x_l^s\colon (S/\LT(I))_i\to (S/\LT(I))_{i+s}),$$
for all $i\ge0$ and $s\ge 1$.
Therefore, it follows from the formula above that (1) and (2) are equivalent.
\end{proof}

\section{Regularity and weak Lefschetz property}

\begin{Definition} 
Let $I$ be a homogeneous ideal of $S$. 
The \textbf{Castelnuovo-Mumford regularity} of $I$, denoted~{\boldmath$\reg(I)$},
is the maximum of the numbers $d_i-i$,
where  $d_i = \max\{j\mid \beta_{i,j}(I)\ne0\}$ and $\beta_{i,j}(I)$ are the graded  
Betti numbers of $I$.
\end{Definition}
In \cite{bayer1987criterion}, the authors described the connection between the Castelnuovo-Mumford regularity of an ideal and the maximal degree of the minimal generators of its generic initial ideal.
\begin{Theorem}[\cite{bayer1987criterion}]\label{theo:regginequalideal} 
Let $I$ be a homogeneous ideal of $S$. Then $\reg(I)=\reg(\rgin(I))$. 
Moreover, if $I$ is a strongly stable ideal, then $\reg(I)$ is the highest degree of a minimal generator of $I$.
\end{Theorem}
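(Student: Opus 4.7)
My plan is to prove the ``moreover'' statement first and then establish the two inequalities $\reg(I) \le \reg(\rgin(I))$ and $\reg(I) \ge \reg(\rgin(I))$ separately.

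For the moreover part, I would invoke the Eliahou--Kervaire resolution, which provides a minimal free resolution of any strongly stable ideal $I$ together with explicit formulas for the graded Betti numbers. Writing $G(I)$ for the set of minimal monomial generators and $m(u)=\max\{i\,:\,x_i\mid u\}$, one has
$$\beta_{i,i+j}(I)=\sum_{u\in G(I),\,\deg u=j}\binom{m(u)-1}{i}.$$
Hence $\beta_{i,i+j}(I)\ne 0$ exactly when some $u\in G(I)$ of degree $j$ satisfies $m(u)\ge i+1$, and the value $(i+j)-i=j$ that governs regularity is maximized precisely when $\deg u$ is maximal. Therefore $\reg(I)$ is the highest degree of a minimal generator of $I$.

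For the inequality $\reg(I)\le\reg(\rgin(I))$ I would appeal to the standard upper semicontinuity of graded Betti numbers under Gr\"obner deformation: the passage from a homogeneous ideal $J$ to $\LT_\sigma(J)$ is realised by a flat one-parameter family, and Betti numbers can only grow on the special fiber. Since $\rgin(I)=\LT_{\DegRevLex}(gI)$ for generic $g\in\GL(l)$ by Theorem~\ref{thm:Galligo}, and a linear change of coordinates preserves graded Betti numbers, one obtains $\beta_{i,j}(I)=\beta_{i,j}(gI)\le\beta_{i,j}(\rgin(I))$ for every $i,j$, and so $\reg(I)\le\reg(\rgin(I))$.

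For the reverse inequality $\reg(I)\ge\reg(\rgin(I))$, the crucial ingredient specific to the revlex order is the identity $\LT_{\DegRevLex}(J+(x_l))=\LT_{\DegRevLex}(J)+(x_l)$ (already used in the proof of Proposition~\ref{prop:ILPginLP}), which implies $\rgin(I+(\ell))=\rgin(I)+(x_l)$ for a generic linear form $\ell\in S_1$. Combined with the Bayer--Stillman recursive criterion---$\reg(I)$ equals the smallest $d$ such that $I_{\ge d}$ is generated in degree $d$ and $((I,\ell_1,\dots,\ell_{i-1}):\ell_i)_d=(I,\ell_1,\dots,\ell_{i-1})_d$ for a generic sequence of linear forms and every $i$---this allows me to run the argument for $\rgin(I)$ with the variables $x_l,x_{l-1},\dots$ playing the role of the generic sequence (legitimate because $\rgin(I)$ is strongly stable), read off $\reg(\rgin(I))$ via the moreover part as the top degree of a generator, and transport the resulting numerical condition back to $I$ through the revlex identity. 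The main obstacle is precisely this reverse inequality: equality $\reg(I)=\reg(\rgin(I))$ is genuinely special to $\sigma=\DegRevLex$ and fails for other term orders such as lex, so the proof must really exploit the compatibility $\LT_{\DegRevLex}(J+(x_l))=\LT_{\DegRevLex}(J)+(x_l)$ together with the Bayer--Stillman recursive description of regularity. Once these two ingredients are available, the remaining comparison of minimal generators and of generic hyperplane sections is a routine bookkeeping exercise.
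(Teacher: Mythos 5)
The paper gives no proof of this theorem: it is quoted directly from Bayer--Stillman, and the ``moreover'' clause is the standard consequence of the Eliahou--Kervaire resolution. Your reconstruction is correct and follows essentially the same route as those sources --- the Eliahou--Kervaire Betti number formula for the generator-degree statement, upper semicontinuity of graded Betti numbers under Gr\"obner degeneration for $\reg(I)\le\reg(\rgin(I))$, and the Bayer--Stillman recursive criterion combined with the revlex compatibility $\LT_{\DegRevLex}(J+(x_l))=\LT_{\DegRevLex}(J)+(x_l)$ for the reverse inequality. The last step is the only place where your write-up stays at the level of a sketch, but you correctly identify it as the genuinely hard point and name exactly the two ingredients that make the Bayer--Stillman argument work.
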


In principle, in order to understand if a ring has the WLP, one has to check an infinite number of multiplication maps. However, the following result shows that it is enough to check a finite number of them.
\begin{Theorem}\label{theo:wlplessregI} 
Let $I$ be a homogeneous ideal of $S$. Then the following facts are equivalent
\begin{enumerate}
\item the graded ring $S/I$ has the WLP.
\item There exists an element $\ell \in (S/I)_1$ such that the multiplication map $\times \ell \colon (S/I)_i \rightarrow (S/I)_{i+1}$ has full-rank for every $0\le i \le \reg(I)-1$.
\end{enumerate}
\end{Theorem}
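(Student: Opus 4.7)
The easy direction (1) $\Rightarrow$ (2) is immediate. For the converse, my plan is to pass to the generic initial ideal $J := \rgin(I)$ and show that $x_l$ is a weak Lefschetz element for $S/J$; by Proposition~\ref{prop:ILPginLP} and Lemma~\ref{lemma:SSWLPx_l} this is equivalent to $S/I$ having the WLP. Set $r := \reg(I)$. By Theorem~\ref{theo:regginequalideal}, $J$ is strongly stable, $\reg(J) = r$, and every minimal generator of $J$ has degree at most $r$.

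To connect (2) with $x_l$ acting on $S/J$, I will use that the full-rank locus of $\ell \in S_1$ in any fixed degree is Zariski open, so the hypothesis (2) upgrades to the same statement for a general $\ell$. For such $\ell$, Conca's Hilbert function comparison from the proof of Proposition~\ref{prop:ILPginLP} yields $\dim_\KK (S/(I,\ell))_{i+1} = \dim_\KK (S/(J,x_l))_{i+1}$, and combined with Remark~\ref{rem:rginsameHFasideal} this translates full-rank of $\times \ell$ on $S/I$ in degree $i$ into full-rank of $\times x_l$ on $S/J$ in degree $i$. So (2) becomes the requirement that $\times x_l \colon (S/J)_i \to (S/J)_{i+1}$ is full-rank for all $0 \le i \le r-1$.

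The heart of the proof will be the automatic statement: for any strongly stable $J$ with $\reg(J) = r$, the map $\times x_l \colon (S/J)_i \to (S/J)_{i+1}$ is full-rank whenever $i \ge r$. I plan to derive this from the sharper combinatorial claim $(J :_S x_l)_i = J_i$ for $i \ge r$; this says $\times x_l$ is injective in those degrees, which automatically forces $\dim_\KK(S/J)_i \le \dim_\KK(S/J)_{i+1}$ and hence full-rank. To prove the claim, take a monomial $m$ of degree $i \ge r$ with $x_l m \in J$ and pick a minimal generator $g$ of $J$ dividing $x_l m$. If $x_l \nmid g$ then $g \mid m$ directly. Otherwise write $g = x_l^a \tilde g$ and $m = x_l^b \tilde m$ with $x_l$ coprime to $\tilde g$ and $\tilde m$; the only subtle subcase is $a = b+1$, where the inequality $i \ge r \ge \deg g$ makes $\tilde m/\tilde g$ a nonconstant monomial in $x_1, \dots, x_{l-1}$, so some $x_k$ with $k < l$ divides it, and strong stability applied to $g$ produces the generator $x_k g/x_l \in J$, which in turn divides $m$.

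Assembling these pieces, (2) together with the automatic full-rank in degrees $\ge r$ gives that $x_l$ is a weak Lefschetz element for $S/J$, so $S/J$ has the WLP and consequently so does $S/I$. The main obstacle I expect is the last subcase of the combinatorial claim in the previous paragraph, where one has to use the degree bound $i \ge r$ at precisely the right moment so that $\tilde m/\tilde g$ admits a factor other than $x_l$ and strong stability can land us back inside $J$.
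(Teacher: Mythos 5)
Your proof is correct and follows essentially the same route as the paper: reduce to the strongly stable ideal $\rgin(I)$, note that $(1)\Rightarrow(2)$ is trivial, and show that multiplication by $x_l$ is automatically full-rank in degrees $\ge \reg(I)$ because a failure of injectivity there would force a minimal generator of degree exceeding the regularity. The only difference is that you spell out details the paper leaves implicit, namely the Zariski-openness argument transferring condition (2) to $\rgin(I)$ via Conca's Hilbert function comparison, and the combinatorial verification that $(\rgin(I):_S x_l)_i=\rgin(I)_i$ for $i\ge\reg(I)$; both are correct.
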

\begin{proof} By Proposition~\ref{prop:ILPginLP} and Theorem~\ref{theo:regginequalideal}, it is enough to prove the statement when $I$ is a strongly stable ideal.
By Definition~\ref{def:WLPandSLP} of WLP, (1) clearly implies (2). On the other hand, if condition (2) is satisfied, we only need to show that the multiplication map $\times \ell \colon (S/I)_i \rightarrow (S/I)_{i+1}$ has full-rank for every $i \ge \reg(I)$. However, this is a consequence of the fact that if $i> \reg(I)$, then $I$ has no minimal generators in degree $i$ and the fact that the Hilbert function $\HF(S/I, i)$ is increasing for all $i \ge \reg(I)$.
\end{proof}

The following result shows that if we are interested in the WLP, we can always reconduct to the Artinian case.   

\begin{Corollary} 
Let $I$ be a homogeneous ideal of $S$. Then the following facts are equivalent
\begin{enumerate}
\item the graded ring $S/I$ has the WLP with Lefschetz element $\ell$.
\item the graded Artinian ring $S/J$ has the WLP with Lefschetz element $\ell$, where $J=I+(x_1,\dots,x_l)^{\reg(I)+1}$.
\end{enumerate}
\end{Corollary}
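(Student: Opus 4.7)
The plan is to show that both conditions reduce, via Theorem~\ref{theo:wlplessregI}, to the same finite list of rank conditions on multiplication by $\ell$ in degrees below $\reg(I)$. Setting $r=\reg(I)$ and $\mathfrak{m}=(x_1,\dots,x_l)$, the first step is the observation that every element of $\mathfrak{m}^{r+1}$ has degree at least $r+1$, so $J_i = I_i$ for $0 \le i \le r$ while $J_i = S_i$ for $i \ge r+1$. Consequently $(S/J)_i = (S/I)_i$ for $i \le r$ and $(S/J)_i = 0$ for $i \ge r+1$; in particular $S/J$ is Artinian, as advertised.

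Next, for any $\ell \in S_1$ and any $0 \le i \le r-1$, the map $\times \ell \colon (S/J)_i \to (S/J)_{i+1}$ is literally identical to the map $\times \ell \colon (S/I)_i \to (S/I)_{i+1}$, since both source and target coincide. For $i \ge r$ the target $(S/J)_{i+1}$ is zero, so $\times \ell \colon (S/J)_i \to (S/J)_{i+1}$ is automatically full-rank. Combining these two observations, $\ell$ is a weak Lefschetz element for $S/J$ if and only if $\times \ell \colon (S/I)_i \to (S/I)_{i+1}$ has full rank for every $0 \le i \le r-1$.

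Finally, Theorem~\ref{theo:wlplessregI} (and, inspecting its proof, the fact that the same element $\ell$ transports between the finite and infinite versions of the condition) tells us that the displayed finite rank condition is equivalent to $\ell$ being a weak Lefschetz element for $S/I$. Chaining the two equivalences yields (1) $\Leftrightarrow$ (2). There is no real obstacle here: the content of the statement is already packaged in Theorem~\ref{theo:wlplessregI}, and the only thing to verify carefully is that the Lefschetz element $\ell$ itself can be taken to be the same in both rings, which is immediate from the degree-wise identification $(S/J)_i = (S/I)_i$ for $i \le r$.
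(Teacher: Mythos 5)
Your argument follows the same overall strategy as the paper's: identify $S/J$ with $S/I$ degree by degree up to $\reg(I)$, note that the maps beyond that range are trivially full rank because the target vanishes, and then invoke Theorem~\ref{theo:wlplessregI} to pass between the truncated rank condition and the full WLP for $S/I$. Your handling of the first step is in fact cleaner than the paper's: you read off $J_i=I_i$ for $i\le\reg(I)$ directly from the fact that $(x_1,\dots,x_l)^{\reg(I)+1}$ lives in degrees $\ge\reg(I)+1$, whereas the paper routes the same comparison through the equality $\rgin(J)=\rgin(I)+(x_1,\dots,x_l)^{\reg(I)+1}$ and Proposition~\ref{prop:ILPginLP}. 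Both are correct, and the direct comparison has the advantage of literally identifying the multiplication maps, not just their ranks.

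The one place where you are too quick is the final step, and it is worth flagging because the paper is equally quick there. The corollary asserts the equivalence for a \emph{fixed} element $\ell$, but Theorem~\ref{theo:wlplessregI} is an existence statement, and its proof reduces to the strongly stable case via $\rgin$ and Proposition~\ref{prop:ILPginLP} --- a reduction that does not track a specific linear form. So your parenthetical remark that ``inspecting its proof'' shows the same $\ell$ transports between the finite and infinite versions is not supported by that proof as written. What is actually needed is the fixed-$\ell$ strengthening: for a given $\ell$, full rank of $\times\ell$ in degrees $0,\dots,\reg(I)-1$ forces full rank in every degree. This is clear when the map in degree $\reg(I)-1$ is surjective (surjectivity propagates upward) or when $\times\ell$ is injective in all degrees (the Hilbert function of $S/I$ is non-decreasing from degree $\reg(I)$ on, via $\rgin$), but the remaining case --- $\ell$ a zerodivisor on $S/I$ whose annihilator first appears in degree $\ge\reg(I)$ --- is not addressed by you or by the paper, whose proof of (2)$\Rightarrow$(1) likewise concludes ``with Lefschetz element $\ell$'' after a chain of reductions that only yields existence. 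This is a shared lacuna rather than an error you introduced, but to make your proof self-contained you should either prove the fixed-$\ell$ version of Theorem~\ref{theo:wlplessregI} or state the corollary as an existence result.
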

\begin{proof} By Theorem~\ref{theo:regginequalideal}, $\rgin(I)$ has no minimal generators of degree greater or equal to $\reg(I)+1$. Hence $\rgin(J)=\rgin(I)+(x_1,\dots,x_l)^{\reg(I)+1}$.

Assume now that $S/I$ has the WLP, then by Proposition~\ref{prop:ILPginLP} and the equality $\rgin(J)=\rgin(I)+(x_1,\dots,x_l)^{\reg(I)+1}$,
we have that the multiplication map $\times \ell \colon (S/J)_i \rightarrow (S/J)_{i+1}$ has full-rank for every $0\le i \le \reg(I)-1$. Since $(S/J)_i=0$ for all $i\ge\reg(I)+1$, we only need to show that the multiplication map $\times \ell \colon (S/J)_{\reg(I)} \rightarrow (S/J)_{\reg(I)+1}$ has full-rank. However, this map is obviously surjective since $(S/J)_{\reg(I)+1}=0$.

Assume now that $S/J$ has the WLP. Clearly, $(S/\rgin(I))_i=(S/\rgin(J))_i$ for all $0\le i \le \reg(I)$. Then by Proposition~\ref{prop:ILPginLP} and Theorem~\ref{theo:wlplessregI}, $S/I$ has the WLP with Lefschetz element $\ell$.
\end{proof}

\section{Almost revlex ideals and Lefschetz properties}

If we consider strongly stable ideals, then studying $x_l$-chains gives us information on the minimal generators of the ideal.
\begin{Lemma}\label{lemma:ssiandgenstandard} 
Let $I$ be a strongly stable ideal of $S$ and $u, ux_l, \dots, ux_l^{s-1}$ a finite $x_l$-chain with respect to $I$. Then
$ux_l^{s}$ is a minimal generator of $I$.
\end{Lemma}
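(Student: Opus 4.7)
The plan is to verify directly the two defining conditions for $ux_l^s$ to be a minimal generator of $I$: first that $ux_l^s \in I$, and second that no proper divisor of $ux_l^s$ lies in $I$. Because $I$ is a monomial ideal we have $\LT(I)=I$, so the finiteness of the chain $u, ux_l, \dots, ux_l^{s-1}$ (which by definition enumerates all $k\ge 0$ with $ux_l^k \notin \LT(I)$) immediately forces $ux_l^s \in I$.

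For the minimality part I would argue by cases on which variable of $ux_l^s$ is being removed. If we divide by $x_l$, the result $ux_l^{s-1}$ already sits inside the chain and therefore is not in $I$, as required. If instead we divide by some $x_i$ with $i<l$, necessarily with $x_i \mid u$, I would suppose for contradiction that $ux_l^s/x_i \in I$. Since $s \ge 1$, this monomial is still divisible by $x_l$, so the strong stability of $I$, applied with the index pair $i<l$, produces
$$x_i \cdot \frac{ux_l^s/x_i}{x_l} \;=\; ux_l^{s-1} \;\in\; I,$$
contradicting the fact that $ux_l^{s-1}$ belongs to the chain.

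The only mildly delicate point is making sure the direction of the strong-stability swap is correct: strong stability lets one replace a larger-index variable appearing in a monomial of $I$ by a smaller-index variable, which is exactly the direction used above (exchanging the $x_l$ for the $x_i$, not the reverse). Beyond this bit of bookkeeping I do not expect any serious obstacle; the argument is essentially a direct unravelling of the definitions of strongly stable ideal and of $x_l$-chain.
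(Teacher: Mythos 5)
Your proof is correct and follows essentially the same route as the paper's: both arguments reduce minimality to the observation that removing one variable from $ux_l^{s}$ must land outside $I$, handling the $x_l$ case via membership of $ux_l^{s-1}$ in the chain and the $x_i$ ($i<l$) case by the same strong-stability swap of an $x_l$ for an $x_i$ to produce the contradiction $ux_l^{s-1}\in I$. The only cosmetic difference is that the paper phrases this by taking a minimal generator $vx_l^r$ dividing $ux_l^s$ and forcing $v=u$, $r=s$, whereas you verify the one-variable-divisor criterion directly.
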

\begin{proof}
There exists a minimal generator $vx_l^r$ such that $v$ divides $u$ and $r\le s$. If $r < s$, then $vx_l^r|ux_l^{s-1}$, and this contradicts the condition that $ux_l^{s-1}\notin\LT(I)=I$. Therefore, we have $r = s$. Assume that $\deg(v) < \deg(u)$. Then there exists $i < l$ such that $x_iv|u$, and hence $x_ivx_l^{s-1}|ux_l^{s-1}$. By the definition of strongly stable ideal, $vx_l^s\in I$ implies that $x_ivx_l^{s-1}\in I$,
and we have $ux_l^{s-1}\in I$. This is a contradiction, and we have $v = u$. This means that $ux_l^s$ is a minimal generator of I.
\end{proof}

\begin{Definition}\label{def:ARL} 
A monomial ideal $I$ of $S$ is called an \textbf{almost revlex ideal}, if for any power-product $t$ in the minimal generating set of $I$, every other power-product $t'$ of $S$ with $\deg(t')=\deg(t)$ and $t'>_{\DegRevLex}t$ belongs to $I$.
\end{Definition}

\begin{Remark} 
Every almost revlex ideal is strongly stable.
\end{Remark}

\begin{Example}
Consider the ideal $I=\ideal{x^3, x^2y, xy^2, xyz}$ in $\mathbb{Q}[x,y,z]$ of Example~\ref{ex:nonss-ssideal}. As seen before, it is not a strongly stable, and hence it is not almost revlex. On the other hand,  also the strongly stable ideal $J=I+\ideal{x^2z}$ is not almost revlex.
In fact, $xyz\in J$, but $y^3\notin J$. If we consider the ideal $J+\ideal{y^3}$, finally, this is an almost revlex ideal. 
\end{Example}

\begin{Remark}\label{rem:ARLsameHFsameideal} 
If two almost revlex ideals have the same Hilbert function, then they coincide.
\end{Remark}

As in \cite[Proposition 12]{harima2009generic} for the Artinian case, to check if an ideal is almost revlex, we can use a sort of inductive method.
\begin{Proposition}\label{prop:ARLxlchaincond} 
Let $I$ be a monomial ideal of $S$ and set $\bar{S}=\KK[x_i,\dots,x_{l-1}]$. Then $I$ is an almost revlex ideal if and only if the following two conditions are satisfied
\begin{enumerate}
\item $I\cap\bar{S}$ is an almost revlex ideal.
\item\label{item:condiiARL} For any two power-products of $S$ $u,v\notin I$ not divisible by $x_l$ with $u<_{\DegRevLex}v$, the ending degree of the $x_l$-chain starting with $u$ is greater or equal to the ending degree of the $x_l$-chain starting with $v$.
\end{enumerate}
\end{Proposition}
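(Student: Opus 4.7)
The plan is to prove the two implications separately. The key underlying observation, used in both directions, is that the minimal generators of $I\cap\bar{S}$ (as an ideal of $\bar{S}$) coincide with those minimal generators of $I$ that are not divisible by $x_l$; this follows from a short divisibility argument and reconciles the almost revlex condition in $\bar{S}$ with the restriction of the one in $S$.

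For the forward direction, part (1) is immediate from this observation: a minimal generator $t$ of $I\cap\bar{S}$ is still a minimal generator of $I$, and any $t'\in\bar{S}$ of the same degree with $t'>_{\DegRevLex}t$ lies in $I$ by almost revlex, hence in $I\cap\bar{S}$. For part (2), I would argue by contradiction. Given $u<_{\DegRevLex}v$ as in the statement with the $v$-chain of length $r_v$, Lemma~\ref{lemma:ssiandgenstandard} says $vx_l^{r_v}$ is a minimal generator of $I$. Supposing the $u$-chain ends strictly earlier, Lemma~\ref{lemma:ssiandgenstandard} applied to $u$ produces a minimal generator $ux_l^{s_u}$ with $s_u\ge 1$ and $\deg(u)+s_u<\deg(v)+r_v$. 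I would then split on whether $\deg(u)+s_u\ge\deg(v)$. In the former case, setting $k=s_u+\deg(u)-\deg(v)\in[0,r_v)$, the monomial $vx_l^k$ has the same degree as $ux_l^{s_u}$ and is strictly larger in $\DegRevLex$ (smaller $x_l$-exponent when $\deg(u)<\deg(v)$, else coming directly from $u<_{\DegRevLex}v$); almost revlex then forces $vx_l^k\in I$, contradicting $k<r_v$. In the latter case, every $w\in\bar{S}_d$ with $d=\deg(u)+s_u$ satisfies $w>_{\DegRevLex}ux_l^{s_u}$ (it has $x_l$-exponent $0<s_u$), so $\bar{S}_d\subseteq I$, whence every monomial of $\bar{S}$ of degree $\ge d$ lies in $I$, including $v$, contradicting $v\notin I$.

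For the backward direction, I would take a minimal generator $t=u'x_l^s$ of $I$ (with $u'\in\bar{S}$) and any $t'=u''x_l^{s'}$ of the same degree with $t'>_{\DegRevLex}t$, and show $t'\in I$. Comparing the $x_l$-exponents forces $s'\le s$, and in every remaining case one checks $u'<_{\DegRevLex}u''$ in $\bar{S}$. If $s=0$ then $s'=0$ and $u'$ is a minimal generator of $I\cap\bar{S}$, so condition (1) gives $u''\in I\cap\bar{S}\subseteq I$. If $s\ge 1$ and $u''\in I$ then $t'\in I$ directly; otherwise, by the minimality of $t$ the $u'$-chain ends exactly at degree $\deg(u')+s-1$, and condition (2) applied to $u'<_{\DegRevLex}u''$ bounds the ending degree of the $u''$-chain by $\deg(u'')+s'-1$, forcing $u''x_l^{s'}\in I$.

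The most delicate step is the second subcase of the forward direction, where the candidate minimal generator $ux_l^{s_u}$ has too small a degree to be compared directly with $vx_l^{r_v}$; the key point is that almost revlex applied at degree $d=\deg(u)+s_u$ produces a full stratum $\bar{S}_d\subseteq I$, which then absorbs all monomials of $\bar{S}$ in degrees $\ge d$ and yields the contradiction.
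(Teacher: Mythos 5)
Your proposal is correct and follows essentially the same route as the paper: the forward direction uses Lemma~\ref{lemma:ssiandgenstandard} to produce the minimal generator $ux_l^{s_u}$ and then splits on whether its degree reaches $\deg(v)$ (your Case A/Case B is exactly the paper's dichotomy $k\ge\deg(v)$ versus $k<\deg(v)$, including the ``full stratum $\bar S_d\subseteq I$ absorbs $v$'' step), and the backward direction reduces $u''x_l^{s'}>_{\DegRevLex}u'x_l^{s}$ to $u''>_{\DegRevLex}u'$ and invokes conditions (1) and (2). Your write-up is in fact slightly more careful than the paper's in handling the trivial subcases ($u''\in I$, and the $\DegRevLex$ comparison when $s'=s$ versus $s'<s$).
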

\begin{proof} 
Assume first that $I$ is an almost revlex ideal. It is clear that $I\cap\bar{S}$ is an almost revlex ideal. Let $u,v\notin I$ be
two power-products of $S$ not divisible by $x_l$ with $u<_{\DegRevLex}v$. This implies that $\deg(u)\le\deg(v)$. Let $s\ge\deg(v)$ and assume that $vx_l^{s-\deg(v)}\notin I$. In order to prove (2), it is enough to show that $ux_l^{s-\deg(u)}\notin I$. 
Assume by absurd, $ux_l^{s-\deg(u)}\in I$, then by Lemma~\ref{lemma:ssiandgenstandard} there exists an integer $\deg(u)< k\le s$ such that $ux_l^{k-\deg(u)}$ is a minimal generator of $I$. Since $I$ is an almost revlex ideal, if $k\ge\deg(v)$, then $vx_l^{k-\deg(v)}\in I$ and hence $vx_l^{s-\deg(v)}\in I$. However, this is a contradiction, and hence $k<\deg(v)$.
In this case, since $I$ is an almost revlex ideal and $ux_l^{k-\deg(u)}$ is a minimal generator of $I$, all the power-products in $x_1,\dots,x_{l-1}$ of degree $k$ belongs to $I$. This implies that $v\in I$. However, this is a contradiction, and hence, $ux_l^{s-\deg(u)}\notin I$.

Assume now that $I$ satisfy conditions (1) and (2), and let $ux_l^s$ be a minimal generator of $I$, where $u$ is a power-product not divisible by $x_l$ and $s\ge0$. We have to show that every power-product of the same degree as $ux_l^s$ which is greater than $ux_l^s$ with respect to $\DegRevLex$ belongs to $I$. If $s=0$, then the statement is true by (1). Assume $s>0$. This implies that $u\notin I$. Consider
a power-product $vx_l^k$ such that $v$ is not divisible by $x_l$, $\deg(vx_l^k)=\deg(ux_l^s)$ and $vx_l^k>_{\DegRevLex}ux_l^s$. We want to show that $vx_l^k\in I$. Since $vx_l^k>_{\DegRevLex}ux_l^s$, then $v>_{\DegRevLex}u$ if $k=s$ or $\deg(v)>\deg(u)$ if $k<s$. In both case $v>_{\DegRevLex}u$. Then by condition (2), $vx_l^k\in I$.
\end{proof}

Similarly to \cite[Corollary 13]{harima2009generic} for the Artinian case, since condition (2) of Proposition~\ref{prop:ARLxlchaincond} in the case where $\deg(u) < \deg(v)$ is exactly the SL condition, we have the following corollary.

\begin{Corollary}\label{corol:ALMhasSLP} 
Let $I$ be an almost revlex ideal of $S$. Then $S/I$ has the SLP with Lefschetz element $x_l$.
\end{Corollary}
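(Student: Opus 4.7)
The plan is to combine Proposition~\ref{prop:ARLxlchaincond} with the lemma from Section~3 that equates the SL condition with $S/\LT(I)$ having the SLP with Lefschetz element $x_l$. Since $I$ is a monomial ideal we have $\LT(I) = I$, so that lemma reduces the task to verifying that $I$ itself satisfies the SL condition.

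Because $I$ is almost revlex, Proposition~\ref{prop:ARLxlchaincond} supplies its condition~(2): for any two power-products $u, v \notin I$ not divisible by $x_l$ with $u <_{\DegRevLex} v$, the ending degree of the $x_l$-chain starting at $u$ is at least the ending degree of the $x_l$-chain starting at $v$. The key observation is that $\DegRevLex$ compares monomials first by total degree, so $\deg(u) < \deg(v)$ automatically forces $u <_{\DegRevLex} v$. Specializing condition~(2) to pairs with $\deg(u) < \deg(v)$ therefore yields exactly the SL condition for $I$, and the conclusion follows from the Section~3 lemma.

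The main obstacle is essentially nil -- the corollary is a transparent combination of two previously established statements. The only point meriting brief attention is the reconciliation between the ``ending degree'' formulation of condition~(2) and the ``$x_l$-chain is finite'' wording of the SL condition, which agree under the natural convention that an infinite $x_l$-chain carries $+\infty$ as its ending degree, so that the inequality between ending degrees captures both the finiteness and the degree comparison required by the SL condition.
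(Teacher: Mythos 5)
Your proposal is correct and follows exactly the paper's route: the corollary is obtained by observing that condition~(2) of Proposition~\ref{prop:ARLxlchaincond}, restricted to pairs with $\deg(u)<\deg(v)$ (which indeed forces $u<_{\DegRevLex}v$), is the SL condition, and then invoking the Section~3 lemma with $\LT(I)=I$. Your remark on reading infinite $x_l$-chains as having ending degree $+\infty$ is a sensible reconciliation of the two formulations and does not change the argument.
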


In \cite[Theorem 14]{harima2009generic}, the authors studied the opposite questions for Artinian ideals in $\KK[x,y,z]$.
\begin{Theorem}\label{Theo:SSin3dimSLP} 
Let $I$ be a strongly stable ideal of $S=\KK[x,y,z]$ such that $S/I$ has the SLP. Then $I$ is an almost revlex ideal and it is uniquely determined by the Hilbert function.
\end{Theorem}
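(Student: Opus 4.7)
The plan is to invoke Proposition~\ref{prop:ARLxlchaincond} directly: to show $I$ is almost revlex, it suffices to verify the two conditions of that proposition, namely that $I\cap\KK[x,y]$ is almost revlex and that $I$ satisfies the $x_l$-chain inequality for arbitrary pairs $u<_{\DegRevLex}v$ not divisible by $z$. The uniqueness part then follows immediately from Remark~\ref{rem:ARLsameHFsameideal}.

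First, since $I$ is strongly stable, by Lemma~\ref{lemma:SSWLPx_l} the SLP of $S/I$ is witnessed by $\ell=x_l=z$, so $I$ satisfies the SL condition. For condition (1) of Proposition~\ref{prop:ARLxlchaincond}, note that $\bar I=I\cap\KK[x,y]$ is still strongly stable (the defining moves never involve $z$). In the polynomial ring in two variables the only strong-stability move available is $y\mapsto x$, so if $x^ay^b$ is a minimal generator of $\bar I$, then all $x^{a+k}y^{b-k}$ for $0\le k\le b$ lie in $\bar I$; these are precisely the power-products of the same degree that are $\ge_{\DegRevLex}x^ay^b$. Hence $\bar I$ is almost revlex.

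For condition (2) of Proposition~\ref{prop:ARLxlchaincond}, take $u,v\notin I$ not divisible by $z$ with $u<_{\DegRevLex}v$. When $\deg(u)<\deg(v)$ the desired inequality on the ending degrees is exactly the SL condition proved above. When $\deg(u)=\deg(v)$, write $u=x^ay^b$ and $v=x^cy^d$ with $a+b=c+d$; the DegRevLex comparison $u<_{\DegRevLex}v$ translates to $b>d$ (equivalently $a<c$). Thus $v$ is obtained from $u$ by performing $c-a$ strong-stability moves of the form ``replace one $y$ by one $x$''. Applying these same moves to $uz^r$ yields $vz^r$, so by strong stability $uz^r\in I$ would force $vz^r\in I$. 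Contrapositively, if the $z$-chain of $v$ reaches degree $\deg(v)+r$ (i.e.\ $vz^r\notin I$), then so does the $z$-chain of $u$, giving the required inequality of ending degrees. Combining both cases with step one, Proposition~\ref{prop:ARLxlchaincond} shows $I$ is almost revlex. Finally, Remark~\ref{rem:ARLsameHFsameideal} guarantees that $I$ is determined by its Hilbert function.

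The only subtle point—and the one I would double-check carefully when writing out the argument—is keeping the direction of the DegRevLex inequality consistent with the direction of the strong-stability moves: in $S\setminus\LT(I)$ the inequality $u<_{\DegRevLex}v$ for monomials of equal degree really does correspond to $u$ having a larger $y$-exponent than $v$, so that the moves go from $u$ towards $v$ rather than the other way around. Once this is pinned down, the rest reduces to invocations of previously stated results.
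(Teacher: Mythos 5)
Your proposal is correct and follows essentially the same route as the paper's proof: verify the two conditions of Proposition~\ref{prop:ARLxlchaincond}, using the SL condition (via Lemma~\ref{lemma:SSWLPx_l}) for the case $\deg(u)<\deg(v)$, strong stability directly for equal degrees, and Remark~\ref{rem:ARLsameHFsameideal} for uniqueness. Your extra justification that strongly stable ideals in $\KK[x,y]$ are almost revlex, and your check of the direction of the $\DegRevLex$ comparison, are both accurate; the paper simply asserts the former without proof.
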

\begin{proof} 
To prove the statement, we will check conditions (1) and (2) of Proposition~\ref{prop:ARLxlchaincond}.
Since strongly stable ideals are almost revlex ideals in $\KK[x,y]$, then condition (1) of Proposition~\ref{prop:ARLxlchaincond} holds.

Let $u,v\notin I$ be two power-products of $\KK[x,y,z]$ not divisible by $z$ with $u<_{\DegRevLex}v$. We want to show that the ending degree of the $z$-chain starting with $u$ is greater or equal than the ending degree of the $z$-chain starting with $v$.
If $\deg(u)<\deg(v)$, the condition (2) of Proposition~\ref{prop:ARLxlchaincond} is exactly the SL condition. By Lemma~\ref{lemma:SSWLPx_l}, $S/I$ has the SLP with strong Lefschetz element $z$, and hence the SL condition holds.
Assume that $\deg(u)=\deg(v)=k$, and write $u=x^ay^{k-a}$, $v=x^by^{k-b}$, for same $a<b$. Since $I$ is strongly stable, if $vz^s\notin I$, then also $uz^s\notin I$. This implies that the condition (2) of Proposition~\ref{prop:ARLxlchaincond} holds.

Finally, $I$ is determined only by the Hilbert function, by Remark~\ref{rem:ARLsameHFsameideal}.
\end{proof}

Notice that Theorem~\ref{Theo:SSin3dimSLP} is false if we only assume  that $S/I$ has the WLP and not the SLP, or if $\dim(S)\ge4$.

\begin{Example} 
Consider $I$ the ideal of $\KK[x,y,z]$ generated by
$$\{x^5, x^4y, x^3y^2, x^2y^3, xy^4, y^5, x^4z, x^3yz, x^2y^2z, x^3z^2, x^2yz^2\}.$$
Then $I$ is a strongly stable ideal such that $S/I$ has the WLP but not the SLP. However, it is not almost revlex,
in fact $x^2yz^2\in I$ but $xy^3z\notin I$.
\end{Example}

\begin{Example}\label{ex:slpbutnotalmostrevlex} 
Consider $I=\ideal{x^2,xy,xz}$ as ideal of $\KK[x,y,z,w]$.
Then $I$ is a strongly stable ideal such that $S/I$ has the SLP with Lefschetz element $w$. However, $I$ is not an almost revlex ideal since $xz\in I$ and $y^2\notin I$.
\end{Example}

Putting together Proposition~\ref{prop:ILPginLP} and Theorem~\ref{Theo:SSin3dimSLP}, we obtain the following result.

\begin{Corollary} 
Let $I$ be a homogeneous ideal of $S=\KK[x,y,z]$ such that $S/I$ has the SLP. Then $\rgin(I)$ is an almost revlex ideal
and it is uniquely determined by the Hilbert function of $I$.
\end{Corollary}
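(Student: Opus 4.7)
The plan is to reduce to the strongly stable case and then invoke Theorem~\ref{Theo:SSin3dimSLP}. Concretely, I would proceed as follows.

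First, apply Proposition~\ref{prop:ILPginLP}: since $S/I$ has the SLP, the quotient $S/\rgin(I)$ also has the SLP. By Theorem~\ref{thm:Galligo} (Galligo's theorem), $\rgin(I)$ is a strongly stable ideal in $S=\KK[x,y,z]$, so the hypotheses of Theorem~\ref{Theo:SSin3dimSLP} are met. That theorem immediately gives that $\rgin(I)$ is an almost revlex ideal and that it is the unique almost revlex ideal with its particular Hilbert function.

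Finally, to state the uniqueness in terms of the Hilbert function of $I$ itself rather than that of $\rgin(I)$, I would invoke Remark~\ref{rem:rginsameHFasideal}, which says that $S/I$ and $S/\rgin(I)$ share the same Hilbert function; combining this with the uniqueness clause of Theorem~\ref{Theo:SSin3dimSLP} (or equivalently with Remark~\ref{rem:ARLsameHFsameideal}), any two homogeneous ideals $I,I'$ satisfying the hypotheses and having the same Hilbert function must yield the same $\rgin$.

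There is really no serious obstacle here: the corollary is a direct composition of Proposition~\ref{prop:ILPginLP}, Theorem~\ref{thm:Galligo}, Theorem~\ref{Theo:SSin3dimSLP}, and Remark~\ref{rem:rginsameHFasideal}, and the only care needed is to remember that Theorem~\ref{Theo:SSin3dimSLP} is specific to the three-variable setting, which is exactly the setting assumed here.
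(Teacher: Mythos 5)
Your proposal is correct and matches the paper's own (very brief) argument: the paper derives this corollary precisely by combining Proposition~\ref{prop:ILPginLP} with Theorem~\ref{Theo:SSin3dimSLP}, using that $\rgin(I)$ is strongly stable. Your additional appeal to Remark~\ref{rem:rginsameHFasideal} to phrase the uniqueness in terms of the Hilbert function of $I$ itself is exactly the implicit step the paper leaves to the reader.
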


\section{Graded Betti numbers and weak Lefschetz property}

Before stating the main result of this section, we recall the following result from \cite{eliahou1990minimal}, as described in Corollary 7.2.3 of \cite{herzog2011monomial}.

\begin{Proposition}\label{prop:elihkercrit} Let $I$ be a strongly stable ideal of $S$. Then 
$$\beta_{i,i+j}(S/I)=\sum_{k=1}^l\binom{k-1}{i-1} m_{k,j},$$
where $m_{k,j}$ is the number of minimal generators of $I$ of degree $j$ such that the biggest variable that divides them is $x_k$.
\end{Proposition}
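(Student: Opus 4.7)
The plan is to deduce the formula from an explicit description of the minimal graded free resolution of a strongly stable (indeed, stable) monomial ideal, namely the Eliahou--Kervaire resolution of $I$. The passage from $I$ to $S/I$ is then the standard reindexing $\beta_{i,n}(S/I)=\beta_{i-1,n}(I)$ for $i\geq 1$ coming from the short exact sequence $0\to I\to S\to S/I\to 0$.

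First, I would recall the construction of the Eliahou--Kervaire complex: in homological position $p$ it has a free basis consisting of symbols $(u;T)$, where $u$ runs over the minimal monomial generators of $I$ and $T\subseteq\{1,\dots,m(u)-1\}$ has $|T|=p$; the internal degree of $(u;T)$ is $\deg(u)+p$. Here $m(u)$ denotes the largest index $k$ with $x_k\mid u$. The differentials are defined by an explicit combinatorial rule involving the canonical decomposition described next.

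The main obstacle is verifying that this complex is a \emph{minimal} free resolution of $I$. The standard route relies on the \textbf{canonical decomposition}: using the strong stability of $I$, every monomial in $I$ factors uniquely as $x^\alpha\cdot u$ with $u$ a minimal generator of $I$ and every variable appearing in $x^\alpha$ of index at most $m(u)$. This normal form supplies the homotopy data needed to establish exactness, and minimality is then immediate because the differentials read off from the canonical decomposition have all coefficients in the maximal ideal (no constant entries).

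Finally, I would count. Each minimal generator $u$ of degree $j$ with $m(u)=k$ contributes exactly $\binom{k-1}{p}$ basis elements at homological position $p$ and internal degree $j+p$, one for each $T\subseteq\{1,\dots,k-1\}$ with $|T|=p$. Grouping generators by the pair $(k,j)$ yields
\[
\beta_{p,\,p+j}(I)=\sum_{k=1}^{l}\binom{k-1}{p}m_{k,j}.
\]
Setting $p=i-1$ and applying $\beta_{i,n}(S/I)=\beta_{i-1,n}(I)$ produces the stated expression for $\beta_{i,i+j}(S/I)$.
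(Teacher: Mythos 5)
The paper offers no proof of this proposition: it is quoted from Eliahou--Kervaire as presented in Corollary 7.2.3 of Herzog--Hibi, and your sketch is precisely the standard proof of that cited result (Eliahou--Kervaire resolution, minimality from the canonical decomposition, then count basis elements), so there is no difference of method to discuss. However, two concrete points need repair. First, you state the canonical decomposition backwards: for a stable ideal the unique factorization of a monomial $w\in I$ is $w=u\cdot x^{\alpha}$ with $u$ a minimal generator and every variable of $x^{\alpha}$ of index \emph{at least} $m(u)$, i.e.\ $m(u)\le\min(x^{\alpha})$, not ``at most $m(u)$''. As written the decomposition already fails to exist for $w=x_1x_2$ in $I=\ideal{x_1}\subseteq\KK[x_1,x_2]$, and the splitting homotopy built from it would not be well defined. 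This reads as a slip, but the direction of the inequality is exactly what the exactness argument uses, so it must be corrected.

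Second, your last step is asserted rather than computed, and carrying it out exposes a degree shift. From your (correct) count $\beta_{p,p+j}(I)=\sum_{k}\binom{k-1}{p}m_{k,j}$ and the reindexing $\beta_{i,n}(S/I)=\beta_{i-1,n}(I)$, taking $n=i+j$ gives
$$\beta_{i,i+j}(S/I)=\beta_{i-1,(i-1)+(j+1)}(I)=\sum_{k=1}^{l}\binom{k-1}{i-1}m_{k,j+1},$$
so the generators being counted have degree $j+1$, not $j$. A sanity check: for $I=\ideal{x,y}\subseteq\KK[x,y]$ one has $\beta_{1,1}(S/I)=2$, which equals the number of generators of degree $j+1=1$, whereas $\sum_k m_{k,0}=0$. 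In other words, your derivation does not land on the displayed statement; the statement as printed carries an off-by-one in $j$ inherited from translating Corollary 7.2.3 from $I$ to $S/I$ (the homological index was shifted but the internal degree was not). You should make this bookkeeping explicit and record the formula with $m_{k,j+1}$ on the right (equivalently, with $\beta_{i,i+j-1}(S/I)$ on the left), rather than claiming the substitution ``produces the stated expression''.
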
 

In order to state the main result, we need a notation for the first differences of the Hilbert function $\HF$.
\begin{Definition} Let $I$ be a homogeneous ideal of $S$. Then, for all $j\ge0$, define
$$c_j(S/I)=\max\{\HF(S/I,j-1)-\HF(S/I,j), 0\},$$
where we put $\HF(S/I,-1)=0$.
\end{Definition}

Similarly to \cite[Proposition 29]{harima2009generic}, if we have a strongly stable ideal $I$ whose quotient algebra $S/I$ has the WLP, then we can compute the graded Betti numbers $\beta_{i,i+j}(S/I)$ in a sort of inductive way.

\begin{Theorem} Let $I$ be a strongly stable ideal of $S$, $\bar{S}=\KK[x_i,\dots,x_{l-1}]$ and $\bar{I}=\bar{S}\cap I$. Assume that $S/I$ has the WLP, then 
$$\beta_{i,i+j}(S/I)=\beta_{i,i+j}(\bar{S}/\bar{I})+\binom{l-1}{i-1}c_j(S/I).$$
\end{Theorem}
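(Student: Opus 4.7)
The plan is to combine the Eliahou--Kervaire formula from Proposition~\ref{prop:elihkercrit} with the observation that, for a strongly stable ideal, the minimal generators divisible by $x_l$ precisely encode the failure of surjectivity of $\times x_l$ on $S/I$, which the WLP forces to equal $c_j(S/I)$.

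First, I would separate the contributions of minimal generators according to whether $x_l$ divides them. Writing $m_{k,j}(I)$ for the number of minimal generators of $I$ of degree $j$ whose largest variable is $x_k$, and noting that the minimal generators of $\bar{I}=I\cap\bar{S}$ are exactly the minimal generators of $I$ not involving $x_l$, one has $m_{k,j}(\bar{I})=m_{k,j}(I)$ for $k\le l-1$. Applying Proposition~\ref{prop:elihkercrit} to both $I$ and $\bar{I}$ (noting that $\bar I$ is strongly stable in $\bar S$) gives
$$\beta_{i,i+j}(S/I)-\beta_{i,i+j}(\bar{S}/\bar{I})=\binom{l-1}{i-1}\,m_{l,j}(I).$$
So the theorem reduces to showing the identity $m_{l,j}(I)=c_j(S/I)$.

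Next I would identify $m_{l,j}(I)$ with $\dim_\KK((I:x_l)/I)_{j-1}$. The map sending a monomial $u$ of degree $j-1$ with $u\notin I$ and $x_lu\in I$ to the generator $x_lu$ is well defined into the set of minimal generators of $I$ of degree $j$ divisible by $x_l$: indeed, by strong stability, for any $i<l$ with $x_i\mid x_l u$ one has that $x_lu/x_i\in I$ would imply $x_lu/x_l=u\in I$, so $x_lu$ is in fact a minimal generator. Conversely, every minimal generator of $I$ of degree $j$ divisible by $x_l$ has the form $x_lu$ with $u\notin I$. This bijection shows $m_{l,j}(I)=\dim_\KK((I:x_l)/I)_{j-1}$, which is exactly the dimension of the kernel of the multiplication map $\times x_l\colon (S/I)_{j-1}\to (S/I)_j$.

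Finally, I would invoke the WLP hypothesis: by Lemma~\ref{lemma:SSWLPx_l}, $x_l$ is a weak Lefschetz element of $S/I$, so the map $\times x_l\colon (S/I)_{j-1}\to(S/I)_j$ is of full rank for every $j\ge 1$. Its kernel therefore has dimension
$$\max\{\HF(S/I,j-1)-\HF(S/I,j),\,0\}=c_j(S/I).$$
Combining, $m_{l,j}(I)=c_j(S/I)$, and the theorem follows. The only subtle step is the bijection in the previous paragraph, where one must use strong stability to verify that $x_lu$ is a \emph{minimal} generator; everything else is a direct bookkeeping argument built on Proposition~\ref{prop:elihkercrit}, Lemma~\ref{lemma:SSWLPx_l}, and the definition of WLP.
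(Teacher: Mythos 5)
Your proposal is correct and follows essentially the same route as the paper: reduce via Proposition~\ref{prop:elihkercrit} to the identity $m_{l,j}=c_j(S/I)$, then use Lemma~\ref{lemma:SSWLPx_l} and the full-rank property of $\times x_l$ to compute the kernel. The only difference is cosmetic: the paper splits into cases according to whether $I$ has minimal generators divisible by $x_l$, whereas you prove the unconditional identification $m_{l,j}=\dim_\KK\ker\bigl(\times x_l\colon (S/I)_{j-1}\to (S/I)_j\bigr)$ explicitly via the strong-stability argument, which the paper only uses implicitly.
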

\begin{proof} By Lemma~\ref{lemma:SSWLPx_l}, we can assume that $S/I$ has the WLP with Lefschetz element $x_l$.
By Proposition~\ref{prop:elihkercrit}, we have that
$$\beta_{i,i+j}(S/I)=\beta_{i,i+j}(\bar{S}/\bar{I})+\binom{l-1}{i-1} m_{l,j}.$$

Assume first that $I$ has no minimal generators divisible by $x_l$, and hence, $m_{l,j}=0$, for all $j\ge0$. This implies that $\beta_{i,i+j}(S/I)=\beta_{i,i+j}(\bar{S}/\bar{I})$. Moreover, we have that the multiplication by $x_l$ is always injective, and hence, $S/I$ has an increasing Hilbert function. This implies that $c_j(S/I)=0$ for all $j\ge0$, and hence the theorem holds.

Assume now that $I$ has some minimal generators divisible by $x_l$. Notice that if $I$ has some minimal generators divisible by $x_l$ of degree $j$, then the multiplication map by $x_l$ from $(S/I)_{j-1}$ to $(S/I)_j$ cannot be injective. Since $S/I$ has the WLP, we have that 
the multiplication map by $x_l$ from $(S/I)_{j-1}$ to $(S/I)_j$ is surjective. This implies that $c_j(S/I)=\dim_\KK((S/I)_{j-1})-\dim_\KK((S/I)_j)$ coincides with the dimension of the kernel of the multiplication map by $x_l$, and hence, $m_{l,j}=c_j(S/I)$. This shows that also in this situation the theorem holds.
\end{proof}


\section{Preliminaries on hyperplane arrangements}\label{sec:arr}

In this section, we recall the terminology, the basic notations and some fundamental results related to hyperplane arrangements.

A finite set of affine hyperplanes $\A =\{H_1, \dots, H_n\}$ in $\KK^l$ 
is called a \textbf{hyperplane arrangement}. For each hyperplane $H_i$ we fix a defining linear polynomial $\alpha_i\in S$ such that $H_i = \alpha_i^{-1}(0)$, 
and let $Q(\A)=\prod_{i=1}^n\alpha_i$. An arrangement $\A$ is called \textbf{central} if each $H_i$ contains the origin of $\KK^l$. 
In this case, each $\alpha_i\in S$ is a linear homogeneous polynomial, and hence $Q(\A)$ is homogeneous of degree $n$. 


We denote by $\Der_{\KK^l} =\{\sum_{i=1}^l f_i\partial_{x_i}~|~f_i\in S\}$ the $S$-module of \textbf{polynomial vector fields} on $\KK^l$ (or $S$-derivations). 
Let $\delta =  \sum_{i=1}^l f_i\partial_{x_i}\in \Der_{\KK^l}$. Then $\delta$ is  said to be \textbf{homogeneous of polynomial degree} $d$ if $f_1, \dots, f_l$ are homogeneous polynomials of degree~$d$ in $S$. 
In this case, we write $\pdeg(\delta) = d$.

\begin{Definition} 
Let $\A$ be a central arrangement in $\KK^l$. Define the \textbf{module of vector fields logarithmic tangent} to $\A$ (or logarithmic vector fields) by
$$D(\A) = \{\delta\in \Der_{\KK^l}~|~ \delta(\alpha_i) \in \ideal{\alpha_i} S, \forall i\}.$$
\end{Definition}

The module $D(\A)$ is obviously a graded $S$-module and we have that $D(\A)= \{\delta\in \Der_{\KK^l}~|~ \delta(Q(\A)) \in \ideal{Q(\A)} S\}$. 

\begin{Definition} 
A central arrangement $\A$ in $\KK^l$ is said to be \textbf{free with exponents $(e_1,\dots,e_l)$} 
if and only if $D(\A)$ is a free $S$-module and there exists a basis $\delta_1,\dots,\delta_l \in D(\A)$ 
such that $\pdeg(\delta_i) = e_i$, or equivalently $D(\A)\cong\bigoplus_{i=1}^lS(-e_i)$.
\end{Definition}

Let $\delta_1,\dots,\delta_l \in D(\A)$. 
Then $\det(\delta_i(x_j))_{i,j}$ is divisible by $Q(\A)$.
The first characterization of freeness is due to Saito \cite{saito} and it uses  the determinant of the coefficient 
matrix of $\delta_1,\dots,\delta_l$ to check if the arrangement $\A$ is free or not. 

\begin{Theorem}[Saito's criterion]\label{theo:saitocrit}
Let $\A$ be a central arrangement in $\KK^l$ and $\delta_1, \dots, \delta_l \in D(\A)$. Then the following facts are equivalent
\begin{enumerate}
\item $D(\A)$ is free with basis $\delta_1, \dots, \delta_l$, i. e. $D(\A) = S\cdot\delta_1\oplus \cdots \oplus S \cdot\delta_l$.
\item $\det(\delta_i(x_j))_{i,j}=c Q(\A)$, where $c\in \KK\setminus\{0\}$.
\item $\delta_1, \dots, \delta_l$ are linearly independent over $S$ and $\sum_{i=1}^l\pdeg(\delta_i)=n$.
\end{enumerate}
\end{Theorem}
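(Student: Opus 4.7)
The plan is to establish the cycle (3)$\Rightarrow$(2)$\Rightarrow$(1)$\Rightarrow$(3), using throughout the preliminary observation recalled just before the statement: $Q(\A)$ divides $\det M$, where $M = (\delta_i(x_j))_{i,j}$, whenever $\delta_1,\dots,\delta_l \in D(\A)$. Since $\delta_i$ has polynomial degree $e_i := \pdeg(\delta_i)$, the $i$-th row of $M$ is homogeneous of degree $e_i$, so $\det M$ is either zero or homogeneous of degree $\sum_i e_i$.

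For (3)$\Rightarrow$(2), $S$-linear independence of $\delta_1,\dots,\delta_l$ gives $\det M \ne 0$; together with $\deg \det M = \sum e_i = n = \deg Q(\A)$ and the divisibility, this forces $\det M = c\,Q(\A)$ for some $c \in \KK\setminus\{0\}$. For (2)$\Rightarrow$(1), the nonvanishing of $\det M$ makes $\delta_1,\dots,\delta_l$ linearly independent over $S$. Given any $\eta \in D(\A)$, write $\eta = \sum_i f_i \delta_i$ in the fraction field of $S$; Cramer's rule yields $f_i = \det M^{(i)}/\det M$, where $M^{(i)}$ is obtained from $M$ by replacing the $i$-th row with $(\eta(x_1),\dots,\eta(x_l))$. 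Since $M^{(i)}$ is itself the matrix associated with the tuple $(\delta_1,\dots,\delta_{i-1},\eta,\delta_{i+1},\dots,\delta_l)\subseteq D(\A)$, the preliminary observation gives $Q(\A)\mid\det M^{(i)}$, and combined with $\det M = cQ(\A)$ this forces $f_i \in S$. Hence $D(\A) = S\delta_1\oplus\cdots\oplus S\delta_l$.

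The main obstacle is (1)$\Rightarrow$(3): linear independence is immediate from the basis property, but the identity $\sum e_i = n$ requires identifying $\det M$ with $Q(\A)$ up to a nonzero scalar, which I would do in two local steps. First, $V(\det M) \subseteq V(Q(\A))$: at any point $p \notin \bigcup H_i$, each $\alpha_j$ is a unit in $S_{\mathfrak{m}_p}$, so the defining conditions for $D(\A)$ become vacuous there and $D(\A)_{\mathfrak{m}_p} = (\Der_{\KK^l})_{\mathfrak{m}_p}$; the hypothesized $S$-basis $\delta_1,\dots,\delta_l$ therefore reduces modulo $\mathfrak{m}_p$ to a basis of $T_p\KK^l$, giving $\det M(p) \ne 0$. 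Hence in the UFD $S$, $\det M$ is, up to a scalar, a product of powers of the distinct linear forms $\alpha_i$. Second, each $\alpha_i$ occurs to the first power: localize at the generic point $\mathfrak{p}_i = \ideal{\alpha_i}$ of $H_i$ and pick linear coordinates $y_1=\alpha_i,y_2,\dots,y_l$ on $\KK^l$; since the other $\alpha_k$ are units at $\mathfrak{p}_i$, only the single constraint $\delta(y_1)\in\ideal{y_1}$ survives, so $D(\A)_{\mathfrak{p}_i}$ has $y_1\partial_{y_1},\partial_{y_2},\dots,\partial_{y_l}$ as a local basis. Comparing with the basis $\partial_{y_1},\dots,\partial_{y_l}$ of $(\Der_{\KK^l})_{\mathfrak{p}_i}$ shows that $\det M$ and $\alpha_i$ differ by a unit in $S_{\mathfrak{p}_i}$, so $v_{\mathfrak{p}_i}(\det M) = 1$. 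Combining the two steps gives $\det M = c\,Q(\A)$ with $c \ne 0$, and comparing degrees yields $\sum e_i = n$, closing the cycle.
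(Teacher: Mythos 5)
The paper does not prove this statement: it is quoted verbatim from Saito \cite{saito} as background in the preliminaries, so there is no in-paper argument to compare yours against. Judged on its own, your proof is correct and is essentially the classical one: the Cramer's-rule argument for (2)$\Rightarrow$(1) using the divisibility $Q(\A)\mid\det M^{(i)}$, the degree count for (3)$\Rightarrow$(2), and the two localization computations (away from the arrangement, and at the generic point of each $H_i$) for (1)$\Rightarrow$(3) all go through. One minor caveat: in the first step of (1)$\Rightarrow$(3) you argue via closed points $p\notin\bigcup H_i$ to conclude that every irreducible factor of $\det M$ is one of the $\alpha_i$; this inference from $V(\det M)\subseteq V(Q(\A))$ needs $\KK$ algebraically closed (over $\RR$, say, $x^2+y^2$ vanishes only at the origin), whereas the paper allows any field of characteristic $0$ and uses $\RR$ in its examples. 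The fix is immediate and keeps you entirely within your own framework: instead of evaluating at points, localize at the height-one prime $(f)$ for an irreducible factor $f$ of $\det M$ not associate to any $\alpha_i$; there all $\alpha_k$ are units, so $D(\A)_{(f)}=(\Der_{\KK^l})_{(f)}$ and $\det M$ is a unit in $S_{(f)}$, a contradiction. With that adjustment the argument is complete and field-independent.
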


Given an arrangement $\A$ in $\KK^l$, the \textbf{Jacobian ideal} $J(\A)$ of $\A$
is the ideal of $S$ generated by $Q(\A)$ and all its partial derivatives.

The Jacobian ideal has a central role in the study of free arrangements.
In fact, we can also characterize freeness by looking at $J(\A)$ via the Terao's criterion.
Notice that Terao described this result for characteristic $0$, but the statement holds true for any characteristic as shown in \cite{palezzato2018free}.

\begin{Theorem}[\cite{terao1980arrangementsI}]\label{theo:freCMcod2} 
A central arrangement $\A$ in $\KK^l$ is free if and only if $S/J(\A)$ is $0$ or $(l-2)$-dimensional Cohen--Macaulay.
\end{Theorem}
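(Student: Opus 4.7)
The plan is to translate freeness of $D(\A)$ into a projective-dimension statement on $S/J(\A)$ via a short exact sequence, and then invoke Auslander--Buchsbaum together with a geometric bound on $\dim(S/J(\A))$.

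First, I would use Euler's identity $\theta_E(Q(\A)) = n \cdot Q(\A)$, where $\theta_E = \sum_i x_i \partial_{x_i}$ and $n = \deg Q(\A)$. When $n \geq 1$ this shows $Q(\A)$ lies in the ideal generated by its partials, so $J(\A) = \ideal{\partial_{x_1}Q(\A), \dots, \partial_{x_l}Q(\A)}$. The trivial cases where $J(\A) = S$ (arrangements with at most one hyperplane) are clearly free and correspond to $S/J(\A) = 0$. The same Euler identity yields the decomposition $D(\A) = D_0(\A) \oplus S \cdot \theta_E$, with $D_0(\A) = \Ker(\delta \mapsto \delta(Q(\A)))$, so $D(\A)$ is free if and only if $D_0(\A)$ is free.

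Next, the $S$-linear map $\Der_{\KK^l} \to S$, $\delta \mapsto \delta(Q(\A))$ (suitably shifted to make it homogeneous of degree $0$) has image $J(\A)$ and kernel $D_0(\A)$, yielding the short exact sequence
$$0 \to D_0(\A) \to \Der_{\KK^l} \to J(\A) \to 0.$$
Since $\Der_{\KK^l} \cong S^l$ is free, this exact sequence gives $\projdim(D_0(\A)) = \max\{\projdim(J(\A)) - 1, 0\}$. Thus $D_0(\A)$ is free exactly when $\projdim(J(\A)) \leq 1$, i.e.\ $\projdim(S/J(\A)) \leq 2$.

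By the Auslander--Buchsbaum formula on the polynomial ring $S$, $\projdim(S/J(\A)) + \depth(S/J(\A)) = l$, so the above is equivalent to $\depth(S/J(\A)) \geq l-2$. To match the claimed dimension/Cohen--Macaulay statement I would bound the dimension from above: the zero set $V(J(\A))$ coincides with the singular locus of the hypersurface $V(Q(\A))$, which is reduced because $Q(\A) = \prod_i \alpha_i$ is squarefree. Hence this singular locus has codimension at least $2$ in $\KK^l$, and $\dim(S/J(\A)) \leq l-2$ whenever $S/J(\A) \neq 0$. Combined with $\depth \leq \dim$, the inequality $\depth(S/J(\A)) \geq l-2$ forces $\dim = \depth = l-2$ and the Cohen--Macaulay property, and conversely.

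The main obstacle will be the careful bookkeeping around the degenerate cases where $J(\A) = S$, together with the grading shifts in the short exact sequence; once these are in place, the equivalence is a direct consequence of Auslander--Buchsbaum and of the elementary fact that a reduced hypersurface has singular locus of codimension at least two.
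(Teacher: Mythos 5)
This theorem is quoted from \cite{terao1980arrangementsI} and the paper gives no proof of its own, so there is nothing to compare line by line; judged on its own terms, your sketch is correct and is essentially the standard proof of Terao's criterion. The chain Euler identity $\Rightarrow$ $D(\A)=D_0(\A)\oplus S\cdot\theta_E$ $\Rightarrow$ the exact sequence $0\to D_0(\A)\to\Der_{\KK^l}\to J(\A)\to 0$ $\Rightarrow$ $\projdim(S/J(\A))\le 2$ via Auslander--Buchsbaum, closed off by the observation that the singular locus of the reduced hypersurface $V(Q(\A))$ has codimension at least $2$ so that $\depth\le\dim\le l-2$ squeezes everything to equality, is exactly how this result is established in the literature. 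Two small points deserve explicit mention in a written-up version: first, to pass from $\projdim(D_0(\A))=0$ to ``$D_0(\A)$ is free'' you should invoke that finitely generated graded projective modules over $S$ are free (Schanuel's lemma plus gradedness suffices; no need for Quillen--Suslin); second, your use of the Euler identity to absorb $Q(\A)$ into the partials, and of generic smoothness for the reduced hypersurface, both use $\operatorname{char}\KK=0$ (or at least that the characteristic does not divide $n$) --- this is consistent with the paper's standing hypothesis, but note the paper remarks that the theorem itself holds in arbitrary characteristic by a different argument \cite{palezzato2018free}, which your route would not recover.
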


In \cite{Gin-freearr}, the authors connected the study of generic initial ideals to the one of arrangements, obtaining a new characterization of freeness via the generic initial ideal of the Jacobian ideal.

\begin{Proposition}[\cite{Gin-freearr}]\label{prop:shapergin}
Let $\A =\{H_1, \dots, H_n\}$ be a central arrangement in $\KK^l$. 
Then $\rgin(J(\A))$ coincides with $S$ or 
its minimal generators include $x_1^{n-1}$, some positive power of
$x_2$, and no monomials only in $x_3,\dots, x_l$. 
\end{Proposition}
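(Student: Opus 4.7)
If $n\leq 1$ then $J(\A)=S$ and $\rgin(J(\A))=S$; assume henceforth $n\geq 2$, so that all generators of $J(\A)$ lie in the maximal ideal and $\rgin(J(\A))\neq S$. My plan is to prove three facts in sequence: (1) $x_1^{n-1}$ is a minimal generator of $\rgin(J(\A))$; (2) $\codim \rgin(J(\A))=2$; (3) the unique minimal prime of $\rgin(J(\A))$ is $(x_1,x_2)$, from which assertions (b) and (c) of the proposition follow.

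For (1), Euler's formula $nQ(\A)=\sum_j x_j\partial_j Q(\A)$ shows that $J(\A)$ is generated by its $l$ partials, all of degree $n-1$; so $J(\A)$, and hence $\rgin(J(\A))$ by Remark~\ref{rem:rginsameHFasideal}, has no nonzero elements below degree $n-1$. After applying a generic $g\in\GL(l)$, the coefficients $a_{ik}$ of the transformed defining forms $\alpha_i=\sum_k a_{ik}x_k$ are generic, so $\prod_i a_{i1}\neq 0$ is the coefficient of $x_1^n$ in $Q(g\A)$ and $\LT_{\DegRevLex}(\partial_1 Q(g\A))=x_1^{n-1}$. By Theorem~\ref{thm:Galligo} this puts $x_1^{n-1}\in\rgin(J(\A))$, and the degree bound forces minimality.

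For (2) and (3), I would first observe geometrically that $V(J(\A))=\mathrm{Sing}(V(Q(\A)))=\bigcup_{i<j}(H_i\cap H_j)$ has codimension $2$ in $\KK^l$, so $\codim J(\A)=2$ and by Remark~\ref{rem:rginsameHFasideal} also $\codim \rgin(J(\A))=2$. The next step is the structural claim that every associated prime $P=(x_{i_1},\dots,x_{i_r})$ (with $i_1<\cdots<i_r$) of a strongly stable ideal $I$ satisfies $i_j=j$: writing $P=(I:m)$ for a monomial $m\notin I$, strong stability applied to $x_{i_1}m\in I$ yields $x_j m\in I$ and hence $x_j\in P$ for every $j\leq i_1$, forcing $i_1=1$, and the same iteration gives $i_j=j$ for each $j$. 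Therefore the associated primes of $\rgin(J(\A))$ form a chain; the codimension $2$ condition pins down the unique minimal prime as $(x_1,x_2)$, and so $\sqrt{\rgin(J(\A))}=(x_1,x_2)$. Conclusion (c) is then immediate, since every minimal generator $\mu$ satisfies $\sqrt{\mu}\subseteq(x_1,x_2)$ and hence involves $x_1$ or $x_2$; and (b) follows because $x_2\in\sqrt{\rgin(J(\A))}$ yields some $x_2^k\in\rgin(J(\A))$, and the smallest such $k$ produces a minimal generator $x_2^k$, whose only proper divisors are lower powers of $x_2$, absent from the ideal.

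The main obstacle I expect is the structural lemma that associated primes of a strongly stable ideal take the form $(x_1,\dots,x_k)$, so that they form a totally ordered chain with a unique minimal element; once this is in hand, the codimension computation via the singular locus of the arrangement and the explicit leading-term identification for $\partial_1 Q$ are fairly routine.
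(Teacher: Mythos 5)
This proposition is quoted from \cite{Gin-freearr}, and the present paper gives no proof of it, so there is no internal argument to compare against; judged on its own, your proof is correct and complete. It also follows what is essentially the standard route of the cited source: Euler's relation pins the initial degree of $J(\A)$ at $n-1$ and a generic change of coordinates places $x_1^{n-1}$ in the leading term ideal (forcing it to be a minimal generator by the degree bound), while the codimension-$2$ singular locus together with the fact that every associated prime of a strongly stable ideal has the form $(x_1,\dots,x_k)$ forces $\sqrt{\rgin(J(\A))}=(x_1,x_2)$, which simultaneously yields the positive power of $x_2$ and rules out minimal generators involving only $x_3,\dots,x_l$.
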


\begin{Example}\label{ex:NotFreeButLP}
Let $\A$ be the arrangement in $\RR^3$ with defining polynomial $Q(\A)=xyz(x+y+z)$.
In this case $\rgin(J(\A))=\ideal{x^3,x^2y,xy^2,y^4,y^3z}$.
\end{Example}

\begin{Theorem}[\cite{Gin-freearr}]\label{theo:firstequivfregin}
Let $\A =\{H_1, \dots, H_n\}$ be a central arrangement in $\KK^l$. 
Then $\A$ is free if and only if $\rgin(J(\A))$ coincides with $S$ or 
its minimal generators include $x_1^{n-1}$, some positive power of
$x_2$, and no monomials in $x_3,\dots, x_l$. 

More precisely, $\A$ is free if and only if
$\rgin(J(\A))$ coincides with $S$ or it is minimally generated by
$$x_1^{n-1},\; x_1^{n-2}x_2^{\lambda_1},\; \dots,\; x_2^{\lambda_{n-1}}$$ 
with $1\le\lambda_1<\lambda_2<\cdots<\lambda_{n-1}$ and $\lambda_{i{+}1}-\lambda_i= 1$ or $2$.
\end{Theorem}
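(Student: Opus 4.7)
The plan is to translate the freeness of $\A$ into a projective-dimension condition on $S/J(\A)$, and then into a structural condition on the minimal generators of $\rgin(J(\A))$ via the Eliahou--Kervaire formula of Proposition~\ref{prop:elihkercrit}. The case $S/J(\A)=0$ is immediate: by Remark~\ref{rem:rginsameHFasideal} it is equivalent to $\rgin(J(\A))=S$, and by Theorem~\ref{theo:freCMcod2} it is equivalent to $\A$ being free; so I assume from now on that $J(\A)\neq S$.

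By Terao's criterion (Theorem~\ref{theo:freCMcod2}), $\A$ is free if and only if $S/J(\A)$ is Cohen--Macaulay of dimension $l-2$. The Auslander--Buchsbaum formula, combined with the preservation of depth under the degree reverse lexicographic generic initial ideal (a standard Bayer--Stillman type result complementing Theorem~\ref{theo:regginequalideal}), makes this equivalent to $\projdim S/\rgin(J(\A))=2$. Since $\rgin(J(\A))$ is strongly stable, Proposition~\ref{prop:elihkercrit} identifies $\projdim S/\rgin(J(\A))$ with the largest index of a variable dividing some minimal generator of $\rgin(J(\A))$. Hence $\A$ is free if and only if every minimal generator of $\rgin(J(\A))$ lies in $\KK[x_1,x_2]$ with at least one divisible by $x_2$; combined with Proposition~\ref{prop:shapergin} this yields the first formulation of the equivalence.

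To extract the precise list of generators, I use that $\rgin(J(\A))$ is strongly stable with minimal generators in $\KK[x_1,x_2]$ containing $x_1^{n-1}$ and some $x_2^{\lambda_{n-1}}$. Setting $g(b)=\min\{a\geq 0 : x_1^a x_2^b\in\rgin(J(\A))\}$, strong stability forces $g$ to be weakly decreasing with $g(b-1)\leq g(b)+1$; since $g(0)=n-1$ must reach $0$, the function $g$ drops by exactly $1$ each time it drops, so it drops exactly $n-1$ times. Therefore the minimal generators are precisely $x_1^{n-1},\,x_1^{n-2}x_2^{\lambda_1},\,\ldots,\,x_2^{\lambda_{n-1}}$ with $1\leq\lambda_1<\lambda_2<\cdots<\lambda_{n-1}$.

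The hard part will be to pin down $\lambda_{i+1}-\lambda_i\in\{1,2\}$ (and its converse). My plan is to invoke the Hilbert-function equality from Remark~\ref{rem:rginsameHFasideal}: for free $\A$ with exponents $(1,e_2,\dots,e_l)$, the isomorphism $D_0(\A)\cong\bigoplus_{i=2}^l S(-e_i)$ translates into the minimal graded free resolution
$$0\longrightarrow\bigoplus_{i=2}^l S(-(n-1+e_i))\longrightarrow S(-(n-1))^l\longrightarrow J(\A)\longrightarrow 0,$$
which determines $\HF(S/J(\A),\cdot)$ in closed form from the $e_i$. Comparing, degree by degree, with the Hilbert series of the two-variable staircase ideal determined by the $\lambda_i$ first forces $\lambda_i=i$ for $i=1,\ldots,l-1$ (by matching in degree $n-1$, where the dimension equals $l$), and then constrains each subsequent increment $\lambda_{i+1}-\lambda_i$ to lie in $\{1,2\}$, simultaneously reconstructing the exponents $e_i$ from the sizes of the gaps. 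The same matching gives the converse, because any strongly stable ideal of the listed shape has projective dimension $2$ and the correct Hilbert function to force $S/J(\A)$ to be Cohen--Macaulay of codimension two.
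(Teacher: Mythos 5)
The paper does not actually prove this theorem: it is imported from \cite{Gin-freearr} and stated without proof, so there is no internal argument to compare against. Judged on its own, your reconstruction is sound and follows what is essentially the route of that source: Terao's criterion (Theorem~\ref{theo:freCMcod2}) together with Auslander--Buchsbaum and the Bayer--Stillman preservation of depth under $\rgin$ reduces freeness to $\projdim S/\rgin(J(\A))=2$; Proposition~\ref{prop:elihkercrit} converts this into the condition that all minimal generators lie in $\KK[x_1,x_2]$; strong stability then yields the staircase list, and the Hilbert-series comparison pins down the gaps. Two remarks. First, your parenthetical claim that the comparison forces $\lambda_i=i$ for $i=1,\dots,l-1$ requires $\A$ to be essential (otherwise the partials of $Q(\A)$ are linearly dependent and $\dim_\KK J(\A)_{n-1}<l$); it is not needed for the statement. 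Second, the final step is only sketched, but it does close: setting $d_i=n-1-i+\lambda_i$, equating the numerators of the two Hilbert series and dividing by $1-t$ gives the multiset identity $\{d_1,\dots,d_{n-1}\}=\bigcup_{i=2}^{l}\{n-1,n,\dots,n-2+e_i\}$; the right-hand side is a union of integer intervals all starting at $n-1$, so its sorted consecutive differences are $0$ or $1$, i.e. $\lambda_{i+1}-\lambda_i\in\{1,2\}$, while the converse of the refined statement already follows from the converse of the first formulation.
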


\begin{Example}
Let $\A$ be the central arrangement in $\RR^3$ with defining polynomial $Q(\A)=xyz(x-y)(x-z)(y-z)$.
$\A$ is a free arrangement with exponents $(1,2,3)$.
In this case $\rgin(J(\A))=\ideal{x^5,x^4y,x^3y^2,x^2y^4,xy^5,y^7}$.
\end{Example}

The following Conjecture appeared in \cite{Gin-freearr}.

\begin{Conjecture}\label{conj:generatZ}
Let $\A$ be a central arrangement in $\KK^l$, 
and consider $d_0=\min\{d~|~x_2^{d}\in\rgin(J(\A))\}$.
If $\rgin(J(\A))$ has a minimal generator $t$ that involves the third variable of $S$, then $\deg(t)\ge d_0$.
\end{Conjecture}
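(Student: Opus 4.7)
The plan is to argue by contradiction. Suppose $I := \rgin(J(\A))$ has a minimal generator $t$ with $x_3 \mid t$ and $\deg(t) < d_0$, and write
$$t = x_1^{a_1} x_2^{a_2} m,$$
where $m \in \KK[x_3,\dots,x_l]$ is a monomial of degree $b \geq 1$. The first step uses only strong stability of $I$: iteratively replacing each occurrence of $x_k$ (for $k \geq 3$) in $m$ by $x_2$ gives $x_1^{a_1} x_2^{a_2+b} \in I$. If $a_1 = 0$, this reads $x_2^{\deg(t)} \in I$, directly contradicting the minimality of $d_0$. So the case $a_1 = 0$ is handled quickly, and we may assume $a_1 \geq 1$.

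The substantive case is $a_1 \geq 1$. Strong stability still gives $x_1^{\deg(t)} \in I$ (by repeatedly swapping $x_2$ for $x_1$), which via the minimal generator $x_1^{n-1}$ forces $\deg(t) \geq n-1$; but this bound is strictly weaker than $d_0$ in general, as Example~\ref{ex:NotFreeButLP} shows ($n-1 = 3$ while $d_0 = 4$). To close the gap I would appeal to features of $J(\A)$ beyond the mere fact that its gin is strongly stable. Two natural ingredients are the Euler identity $n Q(\A) = \sum_{i=1}^l x_i \partial_i Q(\A)$, which couples the generators of $J(\A)$, and Terao's criterion (Theorem~\ref{theo:freCMcod2}), which bounds $\depth(S/J(\A)) \geq l-2$; combined with the Eliahou--Kervaire formula (Proposition~\ref{prop:elihkercrit}), such a depth bound restricts which degrees can accommodate minimal generators of $I$ whose largest variable is $x_k$ with $k \geq 3$.

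The main obstacle is precisely this case $a_1 \geq 1$: strongly stable ideals satisfying the shape prescribed by Proposition~\ref{prop:shapergin} form a strictly larger class than the set of generic initial ideals of Jacobian ideals of arrangements, and one can easily exhibit such monomial ideals violating the conjectured bound. Hence a purely combinatorial approach based on the results of the previous sections cannot suffice; a genuinely arrangement-theoretic input is needed. A concrete route I would try is, after a generic change of coordinates $g \in \GL(l)$, to analyze the initial terms $\LT_{\DegRevLex}(g \cdot \partial_i Q)$ and the low-degree syzygies among them: the presence of a minimal generator $t$ of the form above should translate into such a syzygy, which in turn should force $x_2^{\deg(t)} \in I$ and thereby contradict $\deg(t) < d_0$.
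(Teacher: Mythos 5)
The statement you are addressing is presented in the paper as a \emph{Conjecture} (attributed to \cite{Gin-freearr}); the paper offers no proof of it, so the only question is whether your proposal actually settles it. It does not. Your treatment of the case $a_1=0$ is correct and is the easy part: strong stability of $\rgin(J(\A))$ lets you replace every occurrence of a variable $x_k$ with $k\ge 3$ in $t$ by $x_2$, giving $x_2^{\deg(t)}\in\rgin(J(\A))$ and hence $\deg(t)\ge d_0$. But for $a_1\ge 1$ you concede that you have no argument: strong stability only yields $x_1^{\deg(t)}\in\rgin(J(\A))$ and therefore $\deg(t)\ge n-1$, which is strictly weaker than the conjectured bound (as you note, in Example~\ref{ex:NotFreeButLP} one has $n-1=3$ while $d_0=4$). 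The further ingredients you list --- the Euler identity, Terao's criterion, the Eliahou--Kervaire formula, and an analysis of $\LT_{\DegRevLex}(g\cdot\partial_i Q)$ after a generic change of coordinates --- are reasonable places to look, but none of them is developed into a deduction, and you give no mechanism by which a minimal generator $t$ with $x_3\mid t$ and $\deg(t)<d_0$ would produce the low-degree syzygy you hope would force $x_2^{\deg(t)}\in\rgin(J(\A))$. The substantive case is therefore entirely open in your write-up.

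Your own diagnosis of why a purely combinatorial argument cannot succeed is sound and worth retaining: Proposition~\ref{prop:shapergin} constrains the shape of $\rgin(J(\A))$, but strongly stable ideals of that shape form a strictly larger class than the generic initial ideals of Jacobian ideals of arrangements, and within that larger class the conjectured inequality fails. Any proof must inject genuinely arrangement-theoretic information, and until the case $a_1\ge 1$ is handled by such means, what you have is a correct (and essentially trivial) reduction together with a research plan, not a proof of the conjecture.
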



\section{Hyperplane arrangements and Lefschetz properties}

In this section, we study the Jacobian algebra $S/J(\A)$ of an arrangement $\A$ from the point of view of the Lefschetz properties.

\begin{Proposition}\label{prop:l=2SLP}
Let $\A$ be a central arrangement in $\KK^2$. Then $S/J(\A)$ has the SLP.
\end{Proposition}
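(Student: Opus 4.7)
The plan is to reduce the statement to a question about strongly stable ideals via the generic initial ideal, and then apply the almost revlex machinery developed in Section~5. By Proposition~\ref{prop:ILPginLP}, $S/J(\A)$ has the SLP if and only if $S/\rgin(J(\A))$ does, and by Theorem~\ref{thm:Galligo} the ideal $\rgin(J(\A))$ is strongly stable in $S=\KK[x_1,x_2]$. It therefore suffices to show that for every strongly stable ideal $I$ of $\KK[x_1,x_2]$, the quotient $S/I$ has the SLP.

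To prove this, I would check that any such $I$ is in fact almost revlex, after which Corollary~\ref{corol:ALMhasSLP} immediately yields the SLP with Lefschetz element $x_2$. I would use the inductive criterion of Proposition~\ref{prop:ARLxlchaincond} with $l=2$, so $\bar S=\KK[x_1]$. Condition~(1) is automatic: every monomial ideal in the single variable $x_1$ is trivially almost revlex, since in each degree there is only one monomial to compare against itself. For condition~(2), take two power-products $u=x_1^a$ and $v=x_1^b$ in $S\setminus I$ with $u<_{\DegRevLex}v$, i.e.\ $a<b$. The strongly stable hypothesis says that $x_1^{b-1}x_2^{s+1}\in I$ implies $x_1^{b}x_2^{s}\in I$; the contrapositive gives that whenever $x_1^{b}x_2^{s}\notin I$, also $x_1^{b-1}x_2^{s+1}\notin I$. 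Iterating this one step at a time from $b$ down to $a$ shows that the $x_2$-chain starting at $x_1^a$ has ending degree at least as large as the one starting at $x_1^b$ (and is infinite if the latter is infinite), which is precisely condition~(2) of Proposition~\ref{prop:ARLxlchaincond}.

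I do not anticipate a serious obstacle here: once the reduction to the strongly stable case is in place, the essential combinatorial input is the fact that in two variables the monomials not divisible by $x_l$ form a single chain $1<x_1<x_1^{2}<\cdots$, so condition~(2) of Proposition~\ref{prop:ARLxlchaincond} becomes a direct translation of the strong stability of $I$. A possible alternative route would be to verify the SL condition of Section~3 by hand, but going through almost revlex keeps the proof inside the framework already established in this paper and is also very short.
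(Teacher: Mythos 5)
Your proof is correct, but it takes a genuinely different route from the paper. The paper's proof leans on arrangement theory: every central arrangement in $\KK^2$ is free, so Theorem~\ref{theo:firstequivfregin} gives the explicit shape of $\rgin(J(\A))$, which shows $S/\rgin(J(\A))$ is Artinian, and then the classical fact that every Artinian quotient of $\KK[x_1,x_2]$ has the SLP (Proposition~3.15 of \cite{harima2013lefschetz}) is cited before transferring back via Proposition~\ref{prop:ILPginLP}. You bypass freeness and the external citation entirely: after the same reduction to $\rgin(J(\A))$, you prove that \emph{every} strongly stable ideal of $\KK[x_1,x_2]$ is almost revlex by checking the two conditions of Proposition~\ref{prop:ARLxlchaincond} (both verifications are correct; in particular your contrapositive of strong stability, $x_1^{b}x_2^{s}\notin I \Rightarrow x_1^{b-1}x_2^{s+1}\notin I$, does give condition~(2), and you rightly handle the infinite-chain case), and then invoke Corollary~\ref{corol:ALMhasSLP}. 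This is the same fact the paper itself asserts in passing in the proof of Theorem~\ref{Theo:SSin3dimSLP} (``strongly stable ideals are almost revlex ideals in $\KK[x,y]$''). What your approach buys: it stays entirely within the machinery of Sections~2 and~5, needs no input about arrangements, and actually establishes the stronger statement that $S/I$ has the SLP for an arbitrary homogeneous ideal $I$ of $\KK[x_1,x_2]$, Artinian or not --- which is arguably more in the spirit of the paper's non-Artinian theme. What the paper's route buys is brevity and a direct link to the freeness results that the rest of Section~8 is built around.
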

\begin{proof}
Since every central arrangement in $\KK^2$ is free, by Theorem~\ref{theo:firstequivfregin}, we have that 
$\rgin(J(\A)) = \ideal{x_1^{n-1},\; x_1^{n-2}x_2^{\lambda_1},\; \dots,\; x_2^{\lambda_{n-1}}}$.
This implies that $S/\rgin(J(\A))$ is Artinian.
From Proposition~3.15 in \cite{harima2013lefschetz}, every Artinian $\KK$-algebra $\KK[x_1,x_2]/I$ has the SLP, hence $S/\rgin(J(\A))$ has the SLP. We conclude by Proposition~\ref{prop:ILPginLP}.
\end{proof}

Not all arrangements have their Jacobian algebra that has the WLP.

\begin{Example}\label{ex:AnotWLPinR4}
Let $\A$ be the arrangement in $\RR^4$ with defining polynomial $Q(\A)=xyzw(x-y+z)(y+z-3w)(x+z+w)(x-5w)$.
In this case we have that $\HF(S/\rgin(J(\A)),9)=180$ and $\HF(S/\rgin(J(\A)),10)=207$.
This shows that the multiplication by $w$ from $(S/\rgin(J(\A)))_9$ to $(S/\rgin(J(\A)))_{10}$ is not surjective.
On the other hand, $x^2y^5z^2w$ is a minimal generator of $\rgin(J(\A))$ but $x^2y^5z^2\notin\rgin(J(\A))$, 
and hence the multiplication by $w$ from degree $9$ to degree $10$ is not even injective.
This shows that $w$ is not a Lefschetz element for $S/\rgin(J(\A))$, and hence, by Lemma~\ref{lemma:SSWLPx_l},
$S/\rgin(J(\A))$ does not have the WLP.
By Proposition~\ref{prop:ILPginLP}, also $S/J(\A)$ does not have the WLP.
\end{Example}

The freeness of an arrangement $\A$ forces their Jacobian algebra $S/J(\A)$ to have the SLP.

\begin{Theorem}\label{thm:FreeSLP}
Let $\A$ be a free arrangement in $\KK^l$. Then $S/J(\A)$ has the SLP.
\end{Theorem}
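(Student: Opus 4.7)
The plan is to reduce to the generic initial ideal using Proposition~\ref{prop:ILPginLP}, then exploit the very rigid structure of $\rgin(J(\A))$ imposed by freeness in Theorem~\ref{theo:firstequivfregin}, and finally invoke Corollary~\ref{corol:ALMhasSLP} after verifying the almost revlex condition. So the first move is to observe that it suffices to prove $S/\rgin(J(\A))$ has the SLP; the case $\rgin(J(\A)) = S$ is trivial, and otherwise Theorem~\ref{theo:firstequivfregin} tells us that $\rgin(J(\A))$ is minimally generated by the monomials $x_1^{n-1-i}x_2^{\lambda_i}$ for $i = 0, 1, \dots, n-1$ (with the convention $\lambda_0 = 0$), where $1 \le \lambda_1 < \cdots < \lambda_{n-1}$ and $\lambda_{i+1} - \lambda_i \in \{1,2\}$. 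In particular, no minimal generator involves $x_3, \dots, x_l$.

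Next I will verify that this ideal is almost revlex in the sense of Definition~\ref{def:ARL}. Fix a minimal generator $u = x_1^{n-1-i}x_2^{\lambda_i}$ of degree $d_i = n-1-i+\lambda_i$, and let $v$ be a power-product of degree $d_i$ with $v >_{\DegRevLex} u$. A quick check of DegRevLex shows that if $v$ involves any variable $x_k$ with $k \ge 3$, then the last nonzero entry of the exponent difference $v - u$ is strictly positive, which contradicts $v >_{\DegRevLex} u$. So $v = x_1^{a}x_2^{b}$ with $a + b = d_i$ and $b < \lambda_i$. Writing $a = n-1-j$ with $0 \le j < i$, we have $b = \lambda_i + j - i$, and membership $v \in \rgin(J(\A))$ reduces to the inequality $\lambda_i + j - i \ge \lambda_j$, i.e.\ $\lambda_i - \lambda_j \ge i - j$.

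This inequality is immediate from the gap condition: telescoping gives $\lambda_i - \lambda_j = \sum_{k=j}^{i-1}(\lambda_{k+1} - \lambda_k) \ge i - j$, since each summand is at least $1$. Hence $\rgin(J(\A))$ is almost revlex, so by Corollary~\ref{corol:ALMhasSLP} the quotient $S/\rgin(J(\A))$ has the SLP (with Lefschetz element $x_l$), and Proposition~\ref{prop:ILPginLP} transports this to $S/J(\A)$.

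The main obstacle is purely notational: one has to unpack DegRevLex on monomials involving the higher variables $x_3, \dots, x_l$ and show such monomials never beat $u$, and then translate the almost revlex requirement into the inequality on the $\lambda_i$. The upper bound $\lambda_{i+1} - \lambda_i \le 2$ is not needed; the proof uses only the lower bound $\lambda_{i+1} - \lambda_i \ge 1$, which is the reason the clean two-variable shape forced by freeness is so restrictive.
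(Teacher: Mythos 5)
Your proof is correct, but it deduces the SLP from the structure of $\rgin(J(\A))$ by a different key lemma than the paper does. The paper splits into cases: for $l=2$ it quotes Proposition~\ref{prop:l=2SLP}, and for $l\ge 3$ it observes that since no minimal generator of $\rgin(J(\A))$ involves $x_3,\dots,x_l$, the colon ideal $\rgin(J(\A)):x_l^k$ equals $\rgin(J(\A))$, so multiplication by $x_l^k$ is \emph{injective} in every degree and the SLP is immediate --- no almost revlex machinery is needed. You instead verify directly that the two-variable ideal $\ideal{x_1^{n-1-i}x_2^{\lambda_i}}_i$ is almost revlex (your telescoping argument $\lambda_i-\lambda_j\ge i-j$ from strict monotonicity is exactly right, and your observation that a DegRevLex-larger monomial of the same degree cannot involve $x_3,\dots,x_l$ is the correct reading of the order) and then invoke Corollary~\ref{corol:ALMhasSLP}. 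What your route buys: it is uniform in $l$ (no separate $l=2$ case, since the same computation shows the ideal is almost revlex in $\KK[x_1,x_2]$), and it proves along the way the paper's later Proposition that $\rgin(J(\A))$ is almost revlex for free $\A$, whose proof the paper leaves as ``a direct consequence'' of Definition~\ref{def:ARL} and Theorem~\ref{theo:firstequivfregin} --- your computation is precisely the missing verification. What the paper's route buys: it is shorter and more elementary, using only the absence of $x_l$ from the generators rather than the full almost revlex condition. You are also right that only the lower bound $\lambda_{i+1}-\lambda_i\ge 1$ is used; the upper bound $\le 2$ plays no role in either argument.
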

\begin{proof}
If $l=2$ we can directly conclude by Proposition~\ref{prop:l=2SLP}.
Assume $l\geq3$, by Theorem~\ref{theo:firstequivfregin},
$\rgin(J(\A)) = \ideal{x_1^{n-1},\; x_1^{n-2}x_2^{\lambda_1},\; \dots,\; x_2^{\lambda_{n-1}}}$.
This implies that the multiplication by $x_l^k$ from $(S/\rgin(J(\A)))_i$ to $(S/\rgin(J(\A)))_{i+k}$ is injective for all $i \geq 0$ and $k \geq 1$, and hence $S/\rgin(J(\A))$ has the SLP.
We conclude by Proposition~\ref{prop:ILPginLP}.
\end{proof}

Notice that Theorem~\ref{thm:FreeSLP} is not an equivalence.

\begin{Example}
Let $\A$ be the arrangement in $\RR^3$ of Example~\ref{ex:NotFreeButLP}.
$\A$ is non-free and a direct computation shows that $z$ is a strong Lefschetz element for $S/\rgin(J(\A))$.
Hence, $S/\rgin(J(\A))$ has the SLP.
By Proposition~\ref{prop:ILPginLP}, also $S/J(\A)$ has the SLP.
\end{Example} 

The freeness of an arrangement $\A$ forces the Hilbert function of $S/J(\A)$ to be an increasing function.

\begin{Theorem}\label{thm:FreeIncreasingHF}
Let $\A$ be a free arrangement in $\KK^l$. Then the Hilbert function of $S/J(\A)$ is an increasing function.
\end{Theorem}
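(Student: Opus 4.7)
The plan is to exploit the explicit shape of $\rgin(J(\A))$ provided by Theorem~\ref{theo:firstequivfregin}, in the same spirit as the proof of Theorem~\ref{thm:FreeSLP}. By Remark~\ref{rem:rginsameHFasideal} the Hilbert function of $S/J(\A)$ coincides with that of $S/\rgin(J(\A))$, so it suffices to prove the statement for the latter. For $l\geq 3$, Theorem~\ref{theo:firstequivfregin} tells us that either $\rgin(J(\A))=S$ (in which case the claim is trivial) or its minimal generators are $x_1^{n-1}, x_1^{n-2}x_2^{\lambda_1}, \dots, x_2^{\lambda_{n-1}}$; in particular none of them is divisible by $x_l$.

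The next step is to deduce from this structural information that multiplication by $x_l$ on $S/\rgin(J(\A))$ is injective in every degree. This is purely formal for a monomial ideal whose minimal generators avoid $x_l$: if a monomial $m \notin \rgin(J(\A))$ satisfied $x_l m \in \rgin(J(\A))$, then some minimal generator $g$ of $\rgin(J(\A))$ would divide $x_l m$, and since $x_l \nmid g$ we would get $g \mid m$, contradicting $m \notin \rgin(J(\A))$. Injectivity then yields $\dim_{\KK}((S/\rgin(J(\A)))_i) \leq \dim_{\KK}((S/\rgin(J(\A)))_{i+1})$ for every $i\geq 0$, which is the desired conclusion.

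No serious obstacle is expected, as the argument essentially reuses the injectivity already invoked in the proof of Theorem~\ref{thm:FreeSLP}, combined with the Hilbert-function invariance of the generic initial ideal. The only caveat is the $l=2$ case: there $S/J(\A)$ is Artinian (as seen in Proposition~\ref{prop:l=2SLP}), so its Hilbert function eventually drops to zero, and the statement must be read as non-decreasingness up to the socle degree; the substantive content of the theorem therefore lies in the case $l\geq 3$ treated above.
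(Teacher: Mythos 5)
Your proposal is correct and follows essentially the same route as the paper: reduce to $S/\rgin(J(\A))$ via Remark~\ref{rem:rginsameHFasideal}, observe from Theorem~\ref{theo:firstequivfregin} that for $l\ge 3$ no minimal generator of $\rgin(J(\A))$ is divisible by $x_l$, deduce that multiplication by $x_l$ is injective in every degree, and conclude that the Hilbert function is increasing. Your explicit verification of that injectivity, and your caveat about $l=2$ (where the quotient is Artinian, so the statement needs the reading you give it), are worthwhile additions, since the paper's proof merely cites the injectivity from the proof of Theorem~\ref{thm:FreeSLP}, where it is in fact only established for $l\ge 3$.
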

\begin{proof}
As seen in the proof of Theorem~\ref{thm:FreeSLP}, the freeness of $\A$ implies that the multiplication by $x_l$ from 
$(S/\rgin(J(\A)))_i$ to $(S/\rgin(J(\A)))_{i+1}$ is injective for all $i \geq 0$. This implies that 
$\dim_\KK((S/\rgin(J(\A)))_i) \leq \dim_\KK((S/\rgin(J(\A)))_{i+1})$
for all $i \geq 0$, and hence the Hilbert function of $S/J(\A)$ is an increasing function, by Remark~\ref{rem:rginsameHFasideal}.
\end{proof}

Notice that Theorem~\ref{thm:FreeIncreasingHF} is not an equivalence.

\begin{Example}
Let $\A$ be the arrangement in $\RR^4$ with defining polynomial $Q(\A) = xw(x+y+w)(x-y)(x+z)$. The arrangement $\A$ is not free, however the Hilbert function of the $S/J(\A)$ is a strictly increasing function.
In fact, we have that $\HF(S/J(\A),d) = (1,4,10,20,31,41)$ for $d=0,\dots,5$,
and $\HF(S/J(\A),d+1) = \HF(S/J(\A),d)+10$, for all $d \geq 5$.
\end{Example}

In general the Hilbert function of $S/J(\A)$ is not a monotonic function.

\begin{Example}
Let $\A$ be the arrangement in $\RR^3$ of Example~\ref{ex:NotFreeButLP}.
Then $\HF(S/J(\A),2)=6$, $\HF(S/J(\A),3)=7$ but $\HF(S/J(\A),4)=6$.
\end{Example}

If Conjecture~\ref{conj:generatZ} holds, this would give information on the Jacobian algebra of arrangements in $\KK^3$. 

\begin{Proposition}
Let $\A$ be a central arrangement in $\KK^3$. 
If Conjecture~\ref{conj:generatZ} holds, then $S/J(\A)$ has the WLP.
\end{Proposition}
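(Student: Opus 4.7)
The plan is to reduce, via Proposition~\ref{prop:ILPginLP} and Lemma~\ref{lemma:SSWLPx_l}, to showing that $x_3$ is a weak Lefschetz element for $S/I$, where $I=\rgin(J(\A))$. The case $I=S$ is trivial, so I assume by Proposition~\ref{prop:shapergin} that $I$ is a strongly stable ideal in $S=\KK[x_1,x_2,x_3]$ whose minimal generators include $x_1^{n-1}$ and some $x_2^{d_0}$, and no generator lies in $\KK[x_3]$. Set $d_0=\min\{d\mid x_2^d\in I\}$ and $J=I\cap\KK[x_1,x_2]$. The key is to cut the range of $j$ into two pieces and prove injectivity below $d_0-1$ and surjectivity from $d_0-1$ onward.

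For injectivity when $j<d_0-1$: suppose $m\in S_j$ is a monomial with $m\notin I$ but $mx_3\in I$. Then $mx_3$ is divisible by some minimal generator $g$ of $I$. If $g$ does not involve $x_3$, then $g\mid m$, contradicting $m\notin I$. Hence $g$ involves $x_3$ and, writing $g=g'x_3^k$ with $\gcd(g',x_3)=1$, I would check that $g/x_3$ divides $m$, so $\deg m\ge\deg g-1$. By Conjecture~\ref{conj:generatZ}, $\deg g\ge d_0$, so $\deg m\ge d_0-1$, contradicting $j<d_0-1$. Thus no such $m$ exists and $\times x_3\colon(S/I)_j\to(S/I)_{j+1}$ is injective.

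For surjectivity when $j\ge d_0-1$: the cokernel of $\times x_3$ in degree $j+1$ is $(S/(I+(x_3)))_{j+1}\cong(\KK[x_1,x_2]/J)_{j+1}$. Since $x_2^{d_0}\in J$ and $J$ is strongly stable, repeated application of the strong stability rule gives $x_1^ax_2^{d_0-a}\in J$ for $0\le a\le d_0$, so every monomial of degree $\ge d_0$ in $\KK[x_1,x_2]$ lies in $J$. Consequently $(\KK[x_1,x_2]/J)_m=0$ for all $m\ge d_0$, which covers $m=j+1\ge d_0$, yielding the desired surjectivity.

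Combining the two ranges gives WLP for $S/I$ with Lefschetz element $x_3$, and hence WLP for $S/J(\A)$ by Proposition~\ref{prop:ILPginLP}. The only substantive input beyond bookkeeping is the degree estimate in the injectivity step, which is exactly where Conjecture~\ref{conj:generatZ} is invoked; I expect this is the essential use of the conjecture and no further obstacle arises, since the surjectivity half follows from strong stability alone.
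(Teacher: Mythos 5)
Your proposal is correct and follows essentially the same route as the paper: reduce to $\rgin(J(\A))$, use Proposition~\ref{prop:shapergin} for the shape of its generators, prove injectivity of $\times x_3$ in degrees below the threshold via the degree bound from Conjecture~\ref{conj:generatZ}, and prove surjectivity above it from the fact that $x_2^{d_0}\in\rgin(J(\A))$ together with strong stability kills all monomials in $x_1,x_2$ of degree at least $d_0$. The only cosmetic difference is that the paper phrases the split in terms of $\gamma$ (the least degree of a generator involving $x_3$) rather than $d_0$, and treats the free case separately, but the substance is identical.
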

\begin{proof}
If $\A$ is free, then $S/J(\A)$ has always the WLP by Theorem~\ref{thm:FreeSLP}.
Let $\A$ be a non-free arrangement in $\KK^3$.
From Proposition~\ref{prop:shapergin}, $\rgin(J(\A))$ is minimally generated by
$\{x_1^{n-1},\; x_1^{n-2}x_2^{\lambda_1},\; \dots,\; x_2^{\lambda_{n-1}}\}$ and some other monomials involving $x_3$.
Let $\gamma$ be the smallest degree of a minimal generator of $\rgin(J(\A))$ divisible by $x_3$.  
If Conjecture~\ref{conj:generatZ} holds, then $\gamma \geq \lambda_{n-1}$.


For all $i=0,\dots,\gamma-2$ the multiplication by $x_3$ from $(S/\rgin(J(\A)))_i$ to $(S/\rgin(J(\A)))_{i+1}$ is clearly injective. 
Consider $i\geq\gamma-1$. Since $\gamma \geq \lambda_{n-1}$, $(S/\rgin(J(\A)))_{i+1}$ is generated by monomials that are all divisible by $x_3$. This shows that the multiplication by $x_3$ from $(S/\rgin(J(\A)))_i$ to $(S/\rgin(J(\A)))_{i+1}$ is surjective. This implies that $S/\rgin(J(\A))$ has the WLP.
We conclude by Proposition~\ref{prop:ILPginLP}.
\end{proof}

In general, if we consider strongly stable ideals in three variables that look like the ones described in Proposition~\ref{prop:shapergin}, their associated quotient algebra might not have the WLP.

\begin{Example}
Let $J=\ideal{x^3,x^2y,x^2z,xy^3,y^5}$ be a strongly stable ideal  in $S=\RR[x,y,z]$.
Then $S/J$ does not have the WLP. In fact, $\HF(S/J,2)=6$ and $\HF(S/J,3)=7$. 
However, the multiplication by $z$ from $(S/J)_2$ to $(S/J)_3$ is not injective, and hence by Lemma~\ref{lemma:SSWLPx_l}, $S/J$ does not have the WLP.
\end{Example}

\begin{Proposition}
Let $\A$ be a free arrangement in $\KK^l$. Then $\rgin(J(\A))$ is an almost revlex ideal.
\end{Proposition}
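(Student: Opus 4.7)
The plan is to combine Theorem~\ref{theo:firstequivfregin} with a direct verification of the almost revlex condition from Definition~\ref{def:ARL}. The case $\rgin(J(\A))=S$ is vacuous, so I can assume $\rgin(J(\A))$ is minimally generated by $g_i := x_1^{n-1-i} x_2^{\lambda_i}$ for $i = 0,1,\dots,n-1$ (with the convention $\lambda_0 = 0$), where $1 \leq \lambda_1 < \lambda_2 < \cdots < \lambda_{n-1}$ and $\lambda_{i+1}-\lambda_i \in \{1,2\}$. What I must show is that whenever $t'$ is a power-product with $\deg(t')=\deg(g_i)$ and $t' >_{\DegRevLex} g_i$, some $g_j$ divides $t'$.

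The first step is a reduction that exploits the shape of the generators: since each $g_i$ lies in $\KK[x_1,x_2]$, any monomial $t'$ of the same degree involving some $x_k$ with $k \geq 3$ satisfies $t' <_{\DegRevLex} g_i$. Indeed, if $k$ is the largest index with positive exponent in $t'$, the last nonzero coordinate of the exponent difference is the strictly positive exponent of $x_k$ in $t'$, which by definition of $\DegRevLex$ makes $t'$ the smaller monomial. So it suffices to analyze $t' = x_1^{a'} x_2^{b'}$ with $a'+b' = \deg(g_i)$, and the condition $t' >_{\DegRevLex} g_i$ is then equivalent to $a' > n-1-i$.

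The second step is to produce a divisor of such a $t'$ among the $g_j$. If $a' \geq n-1$, then $g_0 = x_1^{n-1}$ divides $t'$ and we are done. Otherwise $n-i \leq a' \leq n-2$, and I take $j := n-1-a'$, so $0 \leq j \leq i-1$ and $g_j = x_1^{a'} x_2^{\lambda_j}$ has the same $x_1$-exponent as $t'$; the divisibility $g_j \mid t'$ thus reduces to $\lambda_j \leq b'$, equivalently $\lambda_i - \lambda_j \geq i-j$, which follows immediately from $\lambda_{k+1}-\lambda_k \geq 1$ by telescoping. This is the only point at which the structural constraint supplied by Theorem~\ref{theo:firstequivfregin} is actually used.

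I do not anticipate any genuine obstacle: the argument is essentially bookkeeping, with the interesting content concentrated in the final monotonicity inequality. The one subtlety worth flagging is the reduction to $\KK[x_1,x_2]$, which is what makes the check tractable; it is crucial that $\DegRevLex$ places monomials involving $x_3,\dots,x_l$ below those involving only $x_1,x_2$, so that the higher variables play no role.
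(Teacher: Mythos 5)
Your proof is correct and follows the same route as the paper, which simply states that the result is a direct consequence of Definition~\ref{def:ARL} and Theorem~\ref{theo:firstequivfregin}; you have written out that verification explicitly, and all the steps (the reduction to monomials in $x_1,x_2$ via the $\DegRevLex$ comparison, and the telescoping inequality $\lambda_i-\lambda_j\ge i-j$) check out.
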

\begin{proof} It is a direct consequence of Definition~\ref{def:ARL} and Theorem~\ref{theo:firstequivfregin}.
\end{proof}

In general, a central arrangement $\A$ might have $\rgin(J(\A))$ that is not an almost revlex ideal but $S/J(\A)$ has the SLP. Moreover, it might also be that $S/J(\A)$ does not have the SLP.

\begin{Example} Let $\A$ be the arrangement in $\mathbb{R}^5$ with defining polynomial $Q(\A)=xyw(x-y+z+w)(x+v)(x+y+w+v)(z-w-v)\in\mathbb{R}[x,y,z,w,v]$. Then $y^6zw$ is a minimal generator of $\rgin(J(\A))$. However, $z^{8}>_{\DegRevLex}y^6zw$ and $z^{8}\notin\rgin(J(\A))$. This shows that $\rgin(J(\A))$ is not an almost revlex ideal. However, $\rgin(J(\A))$ has no minimal generator divisible by $v$ and hence $S/\rgin(J(\A))$ has the SLP. This implies, by Proposition~\ref{prop:ILPginLP}, that $S/J(\A)$ has the SLP.
\end{Example}
 
\begin{Example} Let $\A$ be the arrangement in $\mathbb{R}^4$ of Example~\ref{ex:AnotWLPinR4}. 
Then $y^8w^3$ is a minimal generator of $\rgin(J(\A))$. However, $z^{11}>_{\DegRevLex}y^8w^3$ and $z^{11}\notin\rgin(J(\A))$. This shows that $\rgin(J(\A))$ is not an almost revlex ideal.
In addition, as seen in  Example~\ref{ex:AnotWLPinR4}, $S/\rgin(J(\A))$ does not have the WLP and hence the SLP. However, the fact that $S/\rgin(J(\A))$ does not have the SLP can also be seen directly from the fact that since $y^8\notin \rgin(J(\A))$ and $y^8w^3\in\rgin(J(\A))$, the multiplication map by $w^3$ does not have maximal rank from degree $8$ to degree $11$.
\end{Example}

Notice that all the examples of central arrangements $\A$ in $\KK^3$, that we have considered so far, have $\rgin(J(\A))$ that is an almost revlex ideal. By Corollary~\ref{corol:ALMhasSLP} and Proposition~\ref{prop:ILPginLP}, all considered arrangements in $\KK^3$ have the property that $S/J(\A)$ has the SLP.

\bigskip
\paragraph{\textbf{Acknowledgements}} The authors would like to thank Y. Numata for many helpful discussions. During the preparation of this article the second author was supported by JSPS Grant-in-Aid for Early-Career Scientists (19K14493).





\end{document}